\documentclass[12 pt]{amsart}

\oddsidemargin=0.3 true in 
\evensidemargin=0.3 true in
\textwidth=6 true in   
   
\headheight=12pt
\headsep=25pt 
\topmargin=0 true in 
\textheight=8 true in

\usepackage{amsmath} 
\usepackage{amssymb}
\usepackage{latexsym}
\usepackage[mathscr]{eucal}
\newtheorem{theorem}{Theorem}

\newtheorem{conjecture}[theorem]{Conjecture}
\newtheorem{lemma}[theorem]{Lemma}

 \def\Ind{\operatorname{Ind}}
 \def\diag{\operatorname{diag}}
 \def\Jord{\operatorname{Jord}}
 \def\im{\operatorname{Im}}

\begin{document}

\title{$R$-groups and parameters}

\author{Dubravka Ban}
\author{David Goldberg}

\address{Department of Mathematics\\
Southern Illinois University\\ Carbondale, IL 62901 \\USA }

\email{dban@math.siu.edu}

\address{Department of Mathematics\\ Purdue University\\
West Lafayette, IN 47907\\ USA}

\email{goldberg@math.purdue.edu}

\thanks{D.B. supported in part by NSF grant DMS-0601005 and by a Research Fellowship
of the Alexander von Humboldt Foundation.}

\maketitle

\section{Introduction} 

Central to representation theory of reductive groups over local fields is the study of parabolically induced representations.  In order to classify the tempered spectrum of such a group, one must understand the structure of parabolically induced from discrete series representations,  in terms of components, multiplicities, and and whether or not components are elliptic.  The Knapp-Stein $R$--group gives an explicit combinatorial method for conducting this study.  On the other hand, the local Langlands conjecture predicts the parameterization of such non-discrete tempered representations, in $L$--packets, by admissible homomorphisms of the Weil-Deligne group which  factor through a Levi component of the Langlands dual group. In \cite{Art1}, Arthur gives a conjectural description of the Knapp-Stein $R$--group in terms of the parameter.  This conjecture generalizes results of Shelstad, \cite{Shel}, for archimedean groups, as well as those of Keys, \cite{keys}, in the case of unitary principal series of certain $p$--adic groups.  In \cite{BZ} the first named author and Zhang establish this conjecture for odd special orthogonal groups.  In \cite{G3}, the second named author establishes the conjecture for induced from supercuspidal representations of split special orthogonal or symplectic groups, under an assumption on the parameter.  In the current work, we complete the conjecture for the full tempered spectrum of all these groups.

Let $F$ be a  nonarchimedean local field of characteristic zero.  We denote by $\bold G$  a connected reductive quasi-split algebraic group defined over $F.$  We let $G=\bold G(F),$ and use similar notation for other groups defined over $F.$  Fix a maximal torus $\bold T$ of $\bold G,$ and a Borel subgroup $\bold B=\bold T\bold U$ containing $\bold T.$    We let $\mathcal E(G)$ be the equivalence classes of irreducible admissible representations of $G,$ let $\mathcal E_t(G)$ be the tempered classes, let $\mathcal E_2(G)$ be the discrete series, and let  $^\circ\mathcal E(G)$ the irreducible unitary supercuspidal classes.   We make no distinction between a representation $\pi$ and its equivalence class.

Let $\bold P=\bold M\bold N$ be a standard, with respect to $\bold B,$ parabolic subgroup of $\bold G.$ Let $\bold A=\bold A_{\bold M}$ be the split component of $\bold M,$ and let $W=W(\bold G,\bold A)=N_{\bold G}(\bold A)/\bold M$ be the Weyl group for this situation.  For  $\sigma\in\mathcal E(M)$ we let $\Ind_P^G(\sigma)$ be the representation unitarily induced from $\sigma\otimes\bold 1_N.$ Thus, if $V$ is the space of $\sigma,$ we let
$V(\sigma)=\left\{f\in C^\infty(G,V)\,\big | f(mng)=\delta_P(m)^{1/2}f(g),\,\forall m\in M,n\in N,\text{ and }g\in G\right\},$
with $\delta_P$ the modulus character of $P.$  The action of $G$ is by the right regular representation, so $(\Ind_P^G(\sigma)(x)f)(g)=f(gx).$  Then any $\pi\in\mathcal E_t(G)$ is an irreducible component of $\Ind_P^G(\sigma)$ for some choice of $M$ and $\sigma.$   In order to determine the component structure of $\Ind_P^G(\sigma),$ Knapp and Stein, in the archimedean case, and Harish-Chandra in the $p$-adic case, developed the theory of singular integral intertwining operators, leading to the theory of $R$-groups, due to Knapp and Stein in the archimedean case, \cite{Kn-St}, and Silberger  in the $p$-adic case \cite{Sil1,Sil2}.  We describe this briefly and refer the reader to the introduction to \cite{G1} for more details.  The poles of the intertwining operators give rise to the zeros of Plancherel measures.
Let $\Phi(\bold P,\bold A)$ be the reduced roots of $\bold A$ in $\bold P.$  For $\alpha\in\Phi(\bold P,\bold A)$ and $\sigma\in\mathcal E_2(M)$ we let $\mu_\alpha(\sigma)$ be the rank one Plancherel measure associated to $\sigma$ and $\alpha.$  We let $\Delta'=\{\alpha\in\Phi(\bold P,\bold A)\,|\, \mu_\alpha(\sigma)=0\}.$    For $w\in W$ and $\sigma\in\mathcal E_2(M)$ we let $w\sigma(m)=\sigma(w^{-1}m\sigma).$  (Note, we make no distinction between $w\in W$ and its representative in $N_G(A).)$   We let $W(\sigma)=\{w\in W|w\sigma\simeq\sigma\},$ and $W'$ be the subgroup of $W(\sigma)$ generated by those $w_\alpha$ with $\alpha\in\Delta'.$  We let $R(\sigma)=\{w\in W(\sigma)|w\Delta'=\Delta'\}=\{w\in W(\sigma)|w\alpha>0,\forall\alpha\in\Delta'\}.$   Let $\mathcal C(\sigma)=\operatorname{End}_G(\Ind_P^G(\sigma)).$

\begin{theorem} (Knapp-Stein, Silberger,\cite{Kn-St,Sil1,Sil2})
For any $\sigma\in\mathcal E_2(M),$
we have $W(\sigma)= R(\sigma)\ltimes W',$ and $\mathcal C(\sigma)\simeq\Bbb C[R(\sigma)]_\eta,$ the group algebra of $R(\sigma)$ twisted by a certain  2-cocycle
$\eta.$
\end{theorem}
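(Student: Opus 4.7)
The plan has two independent parts. For the decomposition $W(\sigma)=R(\sigma)\ltimes W'$, the starting observation is the Weyl-equivariance of rank-one Plancherel measures, $\mu_{w\alpha}(w\sigma)=\mu_\alpha(\sigma)$. Combined with $w\sigma\simeq\sigma$ for $w\in W(\sigma)$, this shows $W(\sigma)$ permutes $\Delta'$ setwise. Silberger's rank-one analysis (building on Harish-Chandra) further tells us that $\Phi':=\langle\Delta'\rangle$ is a root subsystem of $\Phi(\bold G,\bold A)$ admitting $\Delta'$ as a positive system, so $W'$ is the full Weyl group of $\Phi'$ and acts simply transitively on its positive systems. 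Given $w\in W(\sigma)$, the set $w\Delta'$ is another positive system of $\Phi'$, so there is a unique $w_0\in W'$ with $w_0w\Delta'=\Delta'$; the element $r=w_0w$ then lies in $R(\sigma)$, giving a factorization $w=w_0^{-1}r$. Triviality of $R(\sigma)\cap W'$ (only the identity of a Weyl group stabilizes a positive system) and normality of $W'$ in $W(\sigma)$ (as $W(\sigma)$ permutes the generating reflections of $W'$) then follow formally.

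For the commuting algebra, the plan is to construct and analyze normalized intertwining operators $\mathcal{A}(w,\sigma)\in\mathcal{C}(\sigma)$ for each $w\in W(\sigma)$. One fixes an isomorphism $T_w\colon V_\sigma\to V_{w\sigma}$, forms the analytically continued standard intertwining operator $A(w,\sigma)\colon\Ind_P^G(\sigma)\to\Ind_{wP}^G(w\sigma)$, and composes with $T_w$ after identifying the target with $\Ind_P^G(\sigma)$. Three facts drive the argument: (i) Harish-Chandra's commuting algebra theorem, proved via Bruhat decomposition and Frobenius reciprocity, shows that $\{\mathcal{A}(w,\sigma):w\in W(\sigma)\}$ spans $\mathcal{C}(\sigma)$; (ii) the operators obey a cocycle relation $\mathcal{A}(w_1,\sigma)\,\mathcal{A}(w_2,\sigma)=\eta(w_1,w_2)\,\mathcal{A}(w_1w_2,\sigma)$, with the $2$-cocycle $\eta$ determined by the choices of $T_w$ and the compatibility between the analytic continuations; (iii) for each $\alpha\in\Delta'$, the operator $\mathcal{A}(w_\alpha,\sigma)$ is scalar. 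Granting (iii), iteration via (ii) propagates scalarity to all $w\in W'$, and Part 1 collapses the spanning set to $\{\mathcal{A}(r,\sigma):r\in R(\sigma)\}$; linear independence of this family, together with (ii), yields $\mathcal{C}(\sigma)\simeq\Bbb{C}[R(\sigma)]_\eta$.

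The main obstacle is step (iii). The vanishing $\mu_\alpha(\sigma)=0$ must be translated into a scalar property of $\mathcal{A}(w_\alpha,\sigma)$ on $\Ind_{M\cap M_\alpha}^{M_\alpha}(\sigma)$, where $M_\alpha$ denotes the rank-one Levi corresponding to $\alpha$. The mechanism is Harish-Chandra's functional equation relating the composition $A(w_\alpha^{-1},w_\alpha\sigma)\circ A(w_\alpha,\sigma)$ to a multiple of $\mu_\alpha(\sigma)^{-1}\cdot\mathrm{id}$; after renormalizing to absorb the pole arising from $\mu_\alpha=0$, Schur's lemma applied to each irreducible summand of the rank-one induced representation forces scalarity, and a comparison argument pins the eigenvalues together. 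Steps (i) and (ii) are by now classical and largely a matter of careful bookkeeping, so the bulk of the work resides in (iii).
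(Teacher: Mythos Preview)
The paper does not prove this theorem: it is quoted in the introduction as a classical result due to Knapp--Stein and Silberger, with citations to \cite{Kn-St,Sil1,Sil2}, and no proof is supplied. So there is no ``paper's own proof'' to compare against.

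That said, your sketch is a faithful outline of the classical argument found in those references. One point in your treatment of (iii) could be sharpened: rather than invoking the functional equation and renormalizing around the pole, the cleanest route is the equivalence (due to Harish-Chandra in the $p$-adic case, via Silberger) that $\mu_\alpha(\sigma)=0$ if and only if the rank-one induced representation $\Ind_{P\cap M_\alpha}^{M_\alpha}(\sigma)$ is irreducible; scalarity of $\mathcal{A}(w_\alpha,\sigma)$ then follows immediately from Schur's lemma applied to that irreducible module. Your detour through the composition $A(w_\alpha^{-1})\circ A(w_\alpha)$ and ``absorbing the pole'' is not wrong, but it obscures the mechanism and risks circularity, since the irreducibility criterion is itself what underlies the pole structure. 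Otherwise the two-part plan---root-system combinatorics for the semidirect product, then normalized operators plus Harish-Chandra's commuting algebra theorem for $\mathcal{C}(\sigma)$---is exactly the standard one.
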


Thus $R(\sigma),$ along with $\eta,$ determines how many inequivalent components appear in $\Ind_P^G(\sigma)$ and the multiplicity with which each one appears.  Furthermore Arthur shows $\Bbb C[R(\sigma)]_\eta$ also determines whether or not components of $\Ind_P^G(\sigma)$ are elliptic (and hence whether or not they contribute to the Plancherel Formula) \cite{Art2}.

In \cite{Art1} Arthur conjectures a construction of $R(\sigma)$ in terms of the local Langlands conjecture.  Let $W_F$ be the Weil group of $F,$ and $W_F'=W_F\times SL_2(\Bbb C)$ be the Weil-Deligne group.  Suppose $\psi:W_F'\rightarrow\,^LM$ paramterizes the $L$--packet,  $\Pi_\psi(M),$ of $M$ containing $\sigma.$  Here $^LM=\hat M\rtimes W_F$ is the Langlands $L$--group, and $\hat M$ is the complex group whose root datum is dual to that of $\bold M.$  Then $\psi:W_F'\rightarrow\,^LM\hookrightarrow\,^LG$ must be a parameter for an $L$--packet, $\Pi_\psi(G), $ of $G.$  The expectation is $\Pi_\psi(G)$ consists of all irreducible components of $\Ind_P^G(\sigma')$ for all $\sigma'\in\Pi_\psi(M).$  We let $S_\psi=Z_{\hat G}(\im \psi),$ and take $S_\psi^\circ$ to be the connected component of the identity.  Let $T_\psi$ be a maximal torus in $S^\circ_\psi.$  Set $W_\psi=W(S_\psi,T_\psi),$ and $W_\psi^\circ=W(S_\psi^\circ,T_\psi).$  Then $R_\psi=W_\psi/W_\psi^\circ$ is called the $R$-group of the packet $\Pi_\psi(G).$
By duality we can identify $W_\psi$ with a subgroup of $W.$  With this identification, we let $W_{\psi,\sigma}=W_\psi\cap W(\sigma)$ and $W_{\psi,\sigma}^\circ=W_\psi^\circ\cap W(\sigma).$  We then set $R_{\psi,\sigma}=W_{\psi,\sigma}/W_{\psi,\sigma}^\circ.$  We call $R_{\psi,\sigma}$ the Arthur $R$-group attached to $\psi$ and $\sigma.$

\begin{conjecture} For any $\sigma\in\mathcal E_2(M),$ we have $R(\sigma)\simeq R_{\psi,\sigma}.$
\end{conjecture}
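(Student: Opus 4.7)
The plan is to reduce the general tempered case to an explicit combinatorial matching of generators on both sides, leveraging the M\oe{}glin--Tadi\'c parametrization of discrete series for classical groups together with Shahidi's formulas for rank-one Plancherel measures. Since $M$ is a Levi of a split classical group, it factors as $GL_{n_1}(F)\times\cdots\times GL_{n_k}(F)\times G_0$ with $G_0$ a smaller group of the same type, and $\sigma = \sigma_1\otimes\cdots\otimes\sigma_k\otimes\sigma_0$. Each $\sigma_i$ has Langlands parameter $\phi_i = \rho_i\boxtimes S_{a_i}$ for some unitary supercuspidal $\rho_i$ of $GL_{d_i}(F)$, and $\sigma_0$ has discrete parameter $\psi_0$ described by a Jordan set $\Jord(\psi_0)$. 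Composing, $\psi$ decomposes in $\,^L G$ as a direct sum built from the $\phi_i$, their contragredients $\check\phi_i$, and $\psi_0$.

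Next I would compute $R(\sigma)$ combinatorially. The Weyl group $W$ acts on the $GL$-factors by permutations together with the duality involution on each factor, and one expects $\mu_\alpha(\sigma) = 0$ in two situations: ``long'' roots where $\sigma_i\simeq\sigma_j$ or $\sigma_i\simeq\check\sigma_j$, and ``short'' roots (occurring when $\sigma_i$ is self-dual) where the induction $\Ind(\sigma_0\otimes\sigma_i)$ in the appropriate rank-one subgroup is reducible at $s=0$. The long-root reflections together with sign changes generated at reducible short roots span $W'$; what survives in $R(\sigma)=W(\sigma)/W'$ is one $\mathbb Z/2\mathbb Z$ factor per class of self-dual $\phi_i$ whose parity does not match the Jordan data of $\sigma_0$. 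By the reducibility criteria of Shahidi and M\oe{}glin (used in \cite{G3} and \cite{BZ}), this is precisely the condition $(\rho_i,a_i)\notin\Jord(\psi_0)$, so $R(\sigma)\cong(\mathbb Z/2\mathbb Z)^d$ with $d$ equal to the number of such classes.

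Parallel to this I would compute $R_{\psi,\sigma}$ on the parameter side. Grouping equivalent constituents, the centralizer $S_\psi = Z_{\hat G}(\im\psi)$ factors over equivalence classes: each non-self-dual pair $\{\phi_i,\check\phi_i\}$ with total multiplicity $m_i$ contributes a $GL_{m_i}(\mathbb C)$ factor, while each self-dual $\phi_i$ contributes an orthogonal or symplectic factor whose rank is determined by $m_i$ together with the contribution from $\Jord(\psi_0)$, and whose type depends on the parity of $\phi_i$ versus the type of $\hat G$. A maximal torus $T_\psi$ and Weyl group $W_\psi$ are then classical, and $R_\psi = W_\psi/W_\psi^\circ$ is again a product of $\mathbb Z/2\mathbb Z$'s coming from the $O/SO$ discrepancy. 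Intersecting with $W(\sigma)$ under the embedding $W_\psi\hookrightarrow W$ cuts out exactly one $\mathbb Z/2\mathbb Z$ generator for each self-dual $\phi_i$ with $(\rho_i,a_i)\notin\Jord(\psi_0)$, matching the data indexing the generators of $R(\sigma)$.

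The hard part will be the short-root bookkeeping that links vanishing of $\mu_\alpha(\sigma)$ to the presence or absence of a block in $\Jord(\psi_0)$, carried out uniformly when $\sigma_0$ is genuinely a discrete series rather than a supercuspidal. Unlike the setting of \cite{G3}, the parameters $\psi_0$ now carry nontrivial $SL_2$ parts, so one must verify that the rank arithmetic in $S_\psi$ (and the resulting parity of the orthogonal/symplectic factors) agrees with the count produced by Plancherel vanishing. Once this compatibility is established, the natural bijection between $\mathbb Z/2\mathbb Z$-generators on the two sides furnishes an isomorphism $R(\sigma)\simeq R_{\psi,\sigma}$, completing the conjecture for the full tempered spectrum.
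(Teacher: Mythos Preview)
Your plan is essentially the paper's proof: factor $\sigma$ as $(\bigotimes_i\delta_i^{\otimes m_i})\otimes\sigma_0$ with $\delta_i=\delta(\rho_i,a_i)$, quote the known computation $R(\sigma)\cong\mathbb Z_2^c$ with $c$ counting the inequivalent self-dual $\delta_i$ for which $\delta_i\rtimes\sigma_0$ is reducible, compute $S_\psi$ as a product of $GL$, $O$, and $Sp$ blocks to obtain $R_\psi\cong\mathbb Z_2^d$, and then verify $c=d$ via the Jordan-block description of $\varphi_{\sigma_0}$. That is exactly how the paper argues.

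One point needs tightening, however. You state the reducibility criterion for $\delta_i\rtimes\sigma_0$ simply as ``$(\rho_i,a_i)\notin\Jord(\psi_0)$'', but this is only half of the condition. When $\phi_i=\varphi_{\rho_i}\otimes S_{a_i}$ is self-dual but of the \emph{opposite} type to $\hat G$, one has $(\rho_i,a_i)\notin\Jord(\sigma_0)$ automatically (such pairs never satisfy (J-1)), yet $\delta_i\rtimes\sigma_0$ is \emph{irreducible}. The correct statement is that $\delta_i\rtimes\sigma_0$ is reducible iff $\phi_i$ is of the same type as $\hat G$ \emph{and} $(\rho_i,a_i)\notin\Jord(\sigma_0)$. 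You do recognize on the centralizer side that opposite-type $\phi_i$ yield symplectic factors with no $O/SO$ discrepancy, hence no $\mathbb Z_2$ contribution to $R_\psi$; but if you carry your stated criterion literally on the Knapp--Stein side, those same $\phi_i$ would produce spurious $\mathbb Z_2$ generators in $R(\sigma)$, and the two sides would fail to match. Once the type condition is inserted uniformly on both sides, the bookkeeping lines up and the argument runs exactly as in the paper.
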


In \cite{BZ}, the first named author and Zhang proved this conjecture in the case $\bold G=SO_{2n+1}.$ In \cite{G3} the second named author confirmed the conjecture when $\sigma$ is supercuspidal, and $\bold G= SO_{n} or Sp_{2n},$ with a mild assumption on the parameter $\psi.$  Here, we complete the proof of the conjecture for $\bold Sp_{2n},$ or $O_n$, under assumptions given in Section~\ref{sect1.3}.

We now describe the contents of the paper in more detail.  In Section 2 we introduce our notation and discuss the classification of $\mathcal E_2(M)$ for our groups, due to M\oe glin and Tadi\'c, as well as preliminaries on Knapp-Stein and Arthur $R$--groups. In Section 3 we consider the parameters $\psi$ and compute their centralizers. In Section 4 we turn to the case of $\bold G=O_{2n}.$  Here we show the Arthur $R$--group agrees with the generalization of the Knapp Stein $R$--group as discussed in \cite{GH}.  In Section 5 we complete the proof of the Theorem for the induced from discrete series representations of $Sp_{2n}, SO_{2n+1},$ or $O_{2n}.$  

In Section 6, we study $R$-groups for unitary groups. These groups are interesting for us 
because they are not split and the action of the Weil group on the dual group is nontrivial.
In addition, the classification of discrete series and description of $L$-parameters
is completed \cite{Moe3}.

The techniques used here can be used for other groups.  In particular we should be able to carry out this process for  similitude groups  and $G_2.$  Furthermore, the techniques of computing 
the Arthur $R$--groups will apply to $GSpin$ groups, as well, and may shed light on the Knapp-Stein $R$--groups in this case.  We leave all of this for future work.

By closing the introduction, we would like to thank Guy Henniart, Joe Hundley and Freydoon Shahidi
for valuable comments. 
D.B. thanks Werner M\"{u}ller and Mathematical Institute of the University of Bonn
for their hospitality during her three month research stay, where a part of this work was done.

\section{Preliminaries}\label{sect1}

\subsection{Notation}

Let $F$ be a nonarchimedean local field of characteristic zero.
Let $G_n$, $n \in \mathbb{Z}^+$, be $Sp(2n,F)$, $SO(2n+1,F)$ or  $SO(2n,F)$.
We define $G_0$ to be the trivial group.
For $G=G_n$ or $G=GL(n,F)$,
fix the minimal parabolic subgroup consisting of all upper triangular matrices in $G$
and the maximal torus consisting of all diagonal matrices in $G$.
If $\delta_1, \delta_2$ are smooth representations of $GL(m,F)$, $GL(n,F)$, respectively,
we define
$
           \delta_1 \times \delta_2   =   \Ind_{P}^G (\delta_1\otimes \delta_2)
$
where $G = GL(m+n,F)$ and $P=MU$ is the standard parabolic subgroup of $G$ with Levi factor 
$M \cong GL(m,F) \times GL(n,F)$. Similarly, if $\delta$ is a smooth representation of $GL(m,F)$
and $\sigma$ is a smooth representation of $G_n$,
we define
\[
           \delta \rtimes \sigma   =   \Ind_{P}^{G_{m+n} }(\delta \otimes \sigma)
\]
where  $P=MU$ is the standard parabolic subgroup of $G_{m+n}$ with Levi factor 
$M \cong GL(m,F) \times G_n$. 
We denote by $\mathcal{E}_2(G)$ the set of equivalence classes of irreducible square integrable 
representations of $G$ and by 
$^0\mathcal{E}(G)$ the set of equivalence classes of irreducible unitary supercuspidal 
representations of $G$.

We say that a homomorphism $h : X \to GL(d, \mathbb{C})$ is symplectic (respectively, orthogonal)
if $h$ fixes
an alternating form (respectively, a symmetric form) on $GL(d, \mathbb{C})$.
We denote by $S_a$ the standard $a$-dimensional irreducible algebraic representation of 
$SL(2, \mathbb{C})$. Then
\begin{equation}\label{Sa}
    S_a \text{ is} \begin{cases}
        \text{orthogonal}, & \text{ for } a \text{ odd,}\\
        \text{symplectic}, & \text{ for } a \text{ even.}
     \end{cases}
\end{equation}

Let $\rho$ be an irreducible supercuspidal unitary representation of $GL(d,F)$.
According to the local Langlands correspondence for $GL_d$ (\cite{HT}, \cite{Hen1}), attached to
$\rho$ is an $L$-parameter  $\varphi : W_F \to GL(d, \mathbb{C})$.
Suppose $\rho \cong \tilde{\rho}$. Then $\varphi \cong \tilde{\varphi}$
and one of the Artin $L$-functions $L(s, \rm{Sym}^2 \varphi)$ or
$L(s, \wedge^2 \varphi)$ has a pole. The $L$-function $L(s, \rm{Sym}^2 \varphi)$
has a pole if and only if $\varphi$ is orthogonal.
The $L$-function $L(s, \wedge^2 \varphi)$ has a pole if and only if $\varphi$ is symplectic.
From the result of Henniart \cite{Hen2}, we know
\begin{equation}\label{L=L}
   L(s, \wedge^2 \varphi) = L(s, \rho, \wedge^2),\, \text{and }
     L(s, \rm{Sym}^2 \varphi) = L(s, \rho, \rm{Sym}^2),
\end{equation}
where $L(s, \rho, \wedge^2)$ and $L(s, \rho, \rm{Sym}^2)$ are the Langlands $L$-functions
as defined by Shahidi \cite{Sh1}.

Let $\rho$ be an irreducible supercuspidal unitary representation of $GL(d,F)$
 and $a \in \mathbb{Z}^+$.
We define  $\delta(\rho,a)$  to be the unique irreducible 
subrepresentation of 
\[
   \rho ||^{(a-1)/2} \times \rho ||^{(a-3)/2} \times \cdots \times \rho ||^{(-(a-1))/2},
\] 
(see \cite{Zel}).
\subsection{Jordan blocks}\label{sect1.2}

We now review the definition of Jordan blocks from \cite{MoeT}.
Let $G$ be $Sp(2n,F)$, $SO(2n+1,F)$ or $O(2n,F)$. For $d \in \mathbb{N}$, 
let $r_d$ denote the standard representation of $GL(d, \mathbb{C})$.
Define
\[
    R_d = \begin{cases}
        \wedge^2 r_d, & \text{ for } G = Sp(2n,F), O(2n,F),\\
        {\rm Sym}^2 r_d, & \text{ for } G = SO(2n+1,F).
     \end{cases}
\]
Let $\sigma$ be an irreducible discrete series representation of $G_n$. 
Denote by $\Jord(\sigma)$
the set of pairs $(\rho,a)$, where $\rho \in \, ^0\mathcal{E}(GL(d_\rho,F))$, 
$\rho \cong \tilde{\rho}$,
 and $a \in \mathbb{Z}^+$,
such that 

\begin{enumerate}

\item[{(J-1)}] $a$ is even if $L(s, \rho, R_{d_\rho})$ has a pole at $s=0$ and odd otherwise,

\item[{(J-2)}] $ \delta(\rho,a) \rtimes \sigma$ is irreducible.

\end{enumerate}
For
$\rho \in \, ^0\mathcal{E}(GL(d_\rho,F))$, $\rho \cong \tilde{\rho}$,
define
\[
      \Jord_\rho(\sigma) = \{ a \mid (\rho, a) \in \Jord(\sigma) \}.
\]

Let $\hat{G}$ denote the complex dual group of $G$. Then 
$\hat{G} = SO(2n+1, \mathbb{C})$ for $G = Sp(2n,F)$,
$\hat{G} =  Sp(2n,\mathbb{C})$ for $G = SO(2n+1,F)$ and
$\hat{G} =  O(2n,\mathbb{C})$ for $G = O(2n,F)$.

\begin{lemma}\label{J-1}
 Let $\sigma$ be an irreducible discrete series representation of $G_n$. 
Let $\rho$ be an irreducible
supercuspidal self-dual representation of  $GL(d_\rho,F)$ and $a \in \mathbb{Z}^+$.
Then $(\rho,a) \in \Jord(\sigma)$ if and only if the following conditions hold

\begin{enumerate}

\item[{(J-1$'$)}] $\rho \otimes S_a$ is of the same type as $\hat{G}$,

\item[{(J-2)}] $ \delta(\rho,a) \rtimes \sigma$ is irreducible.

\end{enumerate}
\end{lemma}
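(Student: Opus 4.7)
Since condition (J-2) is unchanged, the entire content of the lemma is the equivalence of (J-1) and (J-1$'$). The plan is to unwind both conditions into statements purely about the parities of $a$ and the type (orthogonal or symplectic) of $\rho$, and then verify that they match in each of the three cases for $G$.

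First I would pin down what (J-1$'$) says in each case. The dual groups are $\hat G = SO(2n{+}1,\mathbb{C})$, $Sp(2n,\mathbb{C})$, $O(2n,\mathbb{C})$ for $G = Sp(2n,F)$, $SO(2n{+}1,F)$, $O(2n,F)$ respectively, so ``same type as $\hat G$'' means orthogonal in the $Sp(2n,F)$ and $O(2n,F)$ cases and symplectic in the $SO(2n{+}1,F)$ case. Using the elementary rule that a tensor product of two self-dual representations is orthogonal when both factors have the same type and symplectic when they have opposite types, together with (\ref{Sa}) which says $S_a$ is orthogonal for $a$ odd and symplectic for $a$ even, I rewrite (J-1$'$) as a parity condition: in the $Sp$/$O(2n)$ cases, $a$ is odd iff $\rho$ is orthogonal, while in the $SO(2n{+}1)$ case, $a$ is even iff $\rho$ is orthogonal.

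Next I would unwind (J-1) the same way. By the discussion preceding the lemma, $L(s,\rho,\wedge^2)$ has a pole at $s=0$ iff the parameter $\varphi$ of $\rho$ is symplectic, and $L(s,\rho,\mathrm{Sym}^2)$ has a pole iff $\varphi$ is orthogonal; this uses Henniart's identity (\ref{L=L}) to pass from the Langlands $L$-function to the Artin $L$-function. Plugging in $R_{d_\rho} = \wedge^2 r_{d_\rho}$ for the $Sp$/$O(2n)$ cases and $R_{d_\rho} = \mathrm{Sym}^2 r_{d_\rho}$ for $SO(2n{+}1)$, condition (J-1) becomes, respectively: $a$ is even iff $\rho$ is symplectic (equivalently, $a$ odd iff $\rho$ orthogonal) for $G = Sp(2n,F)$ or $O(2n,F)$; and $a$ is even iff $\rho$ is orthogonal for $G = SO(2n{+}1,F)$.

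Comparing the two reformulations case by case then gives (J-1) $\Leftrightarrow$ (J-1$'$), which is the lemma. There is no real obstacle here: the argument is a small bookkeeping exercise whose only non-trivial inputs are the type assignments for the dual groups, the parity of $S_a$ from (\ref{Sa}), and the translation of poles of $L(s,\rho,R_{d_\rho})$ into the type of $\rho$ via (\ref{L=L}) and Henniart's theorem. The only subtlety worth flagging explicitly is that in the $O(2n)$ case one must use the same rule ($R_d = \wedge^2 r_d$, $\hat G$ orthogonal) as in the $Sp(2n)$ case, which matches the definition given just before the lemma.
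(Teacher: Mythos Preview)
Your proposal is correct and follows essentially the same approach as the paper: reduce to showing (J-1)$\Leftrightarrow$(J-1$'$), translate the pole of $L(s,\rho,R_{d_\rho})$ into the type of $\rho$ via (\ref{L=L}), use (\ref{Sa}) and the tensor-product rule for types to determine the type of $\rho\otimes S_a$, and compare parities case by case. The paper only writes out the $Sp$/$O(2n)$ case and declares $SO(2n{+}1)$ similar, whereas you spell out all three, but the logic is identical.
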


\begin{proof}
We will prove (J-1) $\Leftrightarrow$ (J-1$'$).
We know from \cite{Sh2} that one and only one of the two $L$-functions
$L(s, \rho, \wedge^2)$ and $L(s, \rho, \rm{Sym}^2)$ has a pole at $s=0$.
Suppose $G = Sp(2n,F)$ or $O(2n,F)$.
We consider  $L(s, \rho, \wedge^2)$.
It has a pole at $s=0$ if and only if the parameter
$\rho: W_F \to GL(d_\rho,\mathbb{C})$ is symplectic.
According to (\ref{Sa}), this is equivalent to $\rho \otimes S_a$ being orthogonal
for $a$ even. Therfore, for $(\rho,a) \in \Jord(\sigma)$,
$a$ is even if and only if $\rho \otimes S_a$ is orthogonal.
For $G = SO(2n+1,F)$, the arguments are similar.
\end{proof}

\subsection{Assumptions}\label{sect1.3}
In this paper, we use the classification of discrete series for
classical $p$-adic groups of M{\oe}glin and Tadi\'c \cite{MoeT}, so we have to make the same 
assumptions as there.  
Let $\sigma$ be an irreducible supercuspidal representation of $G_n$ and let
$\rho$ be an irreducible self-dual supercuspidal representation of a general linear group.
We assume:
\\

(BA) \quad $\nu^{\pm(a+1)/2}\rho \rtimes \sigma$ reduces for
\[
       a = \begin{cases}
        \max \Jord_\rho(\sigma), & {\rm if}\, \Jord_\rho(\sigma) \ne \emptyset,\\
      0, &  {\rm if}\, L(s, \rho, R_{d_\rho}) {\rm \, has \, a \, pole \, at \, } s=0 
         {\rm \, and \,} \Jord_\rho(\sigma) = \emptyset,\\
    -1,  & {\rm \, otherwise},
      \end{cases}
\]
moreover, there are no other reducibility points in $\mathbb{R}$.
\\

In addition, we assume that the $L$-parameter of $\sigma$ is given by
\begin{equation}\label{Lsigma}
    \varphi_\sigma = \bigoplus_{(\rho, a) \in \Jord(\sigma)} \varphi_\rho \otimes S_a.
\end{equation}
Here, $\varphi_\rho$ denotes the $L$-parameter of $\rho$ given by \cite{HT}, \cite{Hen1}.

In \cite{Moe2}, assuming certain Fundamental Lemmas,
M{\oe}glin proved the validity of the assumptions for $SO(2n+1,F)$ and
showed how Arthur's results imply the Langlands classification of discrete series for 
$SO(2n+1,F)$.

\subsection{The Arthur $R$-group}

Let  $^LG=\hat G\rtimes W_F$ be the $L$--group of $G,$ and suppose $^LM$ is the $L$--group of a Levi subgroup, $M,$ of $G.$  Then $^LM$ is a Levi subgroup of $^LG$ (see \cite{Bor}, Section 3, for definition of parabolic subgroups and Levi subgroups of $^L\!G$).  Suppose  $\psi$ is an $A$-parameter of $G$ which factors through 
$^L\!M$,
\[
\psi: W_F \times SL(2,{\mathbb C})\times SL(2,{\mathbb C}) \longrightarrow {}^L\!M \subset \, ^L\!G.
\]

Then we can regard $\psi$ as 
an $A$-parameter of $M$.
Suppose, in addition, the image of $\psi$ is not contained in a smaller Levi subgroup
(i.e., $\psi$ is an elliptic parameter of $M$).   

Let $S_{\psi}$ be the centralizer in $\hat{G}$ of the image of $\psi$ and 
$S_{\psi}^0$ its identity component. If $T_{\psi}$ is a maximal torus of $S_{\psi}^0$, define
\[
\begin{array}{l}
W_{\psi}=N_{S_{\psi}}(T_{\psi})/Z_{S_{\psi}}(T_{\psi}), 
\\
W_{\psi}^0=N_{S_{\psi}^0}(T_{\psi})/Z_{S_{\psi}^0}(T_{\psi}),
\\
R_\psi = W_{\psi} / W_{\psi}^0.
\end{array}
\]
Lemma 2.3 of \cite{BZ} and the discussion on page 326 of \cite{BZ}
imply that $W_{\psi}$ can be identified with a subgroup of $W(G,A)$.

Let $\sigma$ be an irreducible unitary representation of $M$. 
Assume $\sigma$ belongs to the $A$-packet  $\Pi_\psi(M)$.
If $W(\sigma)=\{w \in W(G,A) \,|\, w\sigma \cong \sigma\}$, we let
\[
\begin{array}{l}
W_{\psi, \sigma}=W_{\psi} \cap W(\sigma), 
\\
W_{\psi, \sigma}^0=W_{\psi}^0 \cap W(\sigma)
\end{array}
\]
and take 
$
R_{\psi, \sigma}=W_{\psi, \sigma}/W_{\psi, \sigma}^0
$
as the Arthur R-group.

\section{Centralizers} 

Let $G$ be $Sp(2n,F)$, $SO(2n+1,F)$ or $O(2n,F)$. Let $\hat{G}$ be the complex
dual group of $G$. Let
\[
\psi: W_F \times SL(2, {\mathbb C}) \times SL(2, {\mathbb C}) \longrightarrow \hat{G}
   \subset GL(N, {\mathbb C})
\]
be an $A$-parameter. We consider $\psi$ as a representation.
Then $\psi$ is a direct sum of irreducible subrepresentations. 
Let $\psi_0$ be an irreducible subrepresentation. 
For $m \in \mathbb{N}$, set
\[
   m\psi_0 = \underbrace{\psi_0 \oplus \cdots \oplus \psi_0.}_{m-times}
\]
If 
$\psi_0 \ncong \tilde{\psi}_0$, then it can be shown
using the bilinear form on $\hat{G}$ that $\tilde{\psi}_0$ is also a subrepresentation of $\psi$.
Therefore, $\psi$  decomposes into 
a sum of irreducible subrepresentations
\[
    \psi = (m_1\psi_1 \oplus m_1\tilde{\psi}_1)\oplus \cdots \oplus 
    (m_k\psi_k \oplus m_k\tilde{\psi}_k) \oplus m_{k+1}\psi_{k+1} \oplus \cdots \oplus 
    m_\ell\psi_\ell,
\]
where $\psi_i \ncong \psi_j$, $\psi_i \ncong \tilde{\psi}_j$ for $i \ne j$. In addition,
$\psi_i \ncong \tilde{\psi}_i$ for $i=1, \dots, k$ and
$\psi_i \cong \tilde{\psi}_i$ for $i=k+1, \dots, \ell$. 
If $\psi_i \cong \tilde{\psi}_i$, then $\psi_i$ factors through a symplectic or orthogonal group. 
In this case,
if $\psi_i$ is not of the same type as $\hat{G}$, then $m_i$ must be even.
This follows again using the bilinear form on $\hat{G}$.

We want to compute $S_{\psi}$ and $W_\psi$.
First, we consider the case $\psi = m \psi_0$ or $\psi = m \psi_0 \oplus m \tilde{\psi}_0$, where $\psi_0$ is irreducible.
The following lemma is an extension of Proposition 6.5 of \cite{GrP}.
A part of the proof was communicated to us by Joe Hundley.

\begin{lemma}\label{GP}
Let $G$ be $Sp(2n,F)$, $SO(2n+1,F)$ or $O(2n,F)$.
Let
$
\psi_0: W_F \times SL(2, {\mathbb C}) \times SL(2, {\mathbb C}) \to GL(d_0, {\mathbb C})
$
be an irreducible parameter.

\begin{enumerate}

\item[{(i)}] Suppose $\psi_0 \ncong \tilde{\psi}_0$ and $\psi = m \psi_0 \oplus m \tilde{\psi}_0$.
Then $S_\psi \cong GL(m, {\mathbb C})$ and $R_\psi = 1$.

\item[{(ii)}] Suppose $\psi_0 \cong \tilde{\psi}_0$ and $\psi = m \psi_0$. Suppose $\psi_0$
is of the same type as $\hat{G}$. Then
\[
     R_\psi \cong \begin{cases}
         \mathbb{Z}_2, &  m \text{ even},\\
         1, &   m \text{ odd. }
    \end{cases}
\]

\item[{(iii)}] Suppose $\psi_0 \cong \tilde{\psi}_0$ and $\psi = m \psi_0$. Suppose $\psi_0$
is not of the same type as $\hat{G}$. Then $m$ is even,
$S_\psi \cong Sp(m, {\mathbb C})$ and $R_\psi = 1$.
 
\end{enumerate}
\end{lemma}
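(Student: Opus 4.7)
The plan is to follow the strategy of Proposition~6.5 of \cite{GrP}: compute the full centralizer of $\psi$ in $GL(N,\mathbb{C})$ using Schur's lemma, intersect with $\hat G$ using the invariant bilinear form $B$ that defines $\hat G$, and read off $S_\psi^\circ$ and $R_\psi$ from the resulting classical group.

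For case (i), I would let $V_0$ be the representation space of $\psi_0$, realize $\tilde\psi_0$ on $V_0^\vee$, and write $V = (V_0\otimes W_1)\oplus(V_0^\vee\otimes W_2)$ with multiplicity spaces $W_1,W_2\cong\mathbb{C}^m$. Schur gives $Z_{GL(V)}(\psi)=GL(W_1)\times GL(W_2)$. Since $\psi_0\ncong\tilde\psi_0$ there is no nonzero $\psi_0$-invariant bilinear form on $V_0$, so $B$ must pair the two summands via the canonical duality and hence factor as $\langle\,,\,\rangle_{V_0,V_0^\vee}\otimes B'$ for a nondegenerate $B'\colon W_1\otimes W_2\to\mathbb{C}$. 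The isometry condition forces $A_2=(A_1^*)^{-1}$ with respect to $B'$, so $S_\psi\cong GL(m,\mathbb{C})$, which is connected; hence $R_\psi=1$.

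For cases (ii) and (iii), I would write $V=V_0\otimes W$ with $W=\mathbb{C}^m$, so that $Z_{GL(V)}(\psi)=GL(W)$. Since $\psi_0\cong\tilde\psi_0$ is irreducible there is, up to scalar, a unique nondegenerate $\psi_0$-invariant form $B_0$ on $V_0$, symmetric or alternating according as $\psi_0$ is orthogonal or symplectic. Invariance of $B$ then forces $B=B_0\otimes B_W$ for a bilinear form $B_W$ on $W$, and the parity rule for tensor products (symmetric $\otimes$ symmetric and alternating $\otimes$ alternating are symmetric, while mixed parities give alternating) shows $B_W$ is symmetric when the types of $\psi_0$ and $\hat G$ agree, and alternating when they differ; in the latter case nondegeneracy of $B_W$ forces $m$ even. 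This yields $S_\psi\cong O(m,\mathbb{C})$ in case (ii) and $S_\psi\cong Sp(m,\mathbb{C})$ in case (iii).

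To finish, in case (iii) $Sp(m,\mathbb{C})$ is connected, so $R_\psi=1$. In case (ii), $S_\psi^\circ=SO(m,\mathbb{C})$; for $m$ odd one has $O(m,\mathbb{C})=SO(m,\mathbb{C})\times\{\pm I\}$ with the second factor central, so $W_\psi=W_\psi^\circ$ and $R_\psi=1$, while for $m=2k$ even a maximal torus of $SO(2k,\mathbb{C})$ is also maximal in $O(2k,\mathbb{C})$ and the resulting Weyl groups are the full signed permutation group on $k$ letters and its even-sign subgroup, giving $R_\psi\cong\mathbb{Z}_2$. The main point requiring care is the parity bookkeeping for the induced form $B_W$ on the multiplicity space; once that is correctly set up, the rest is standard structure theory of classical groups.
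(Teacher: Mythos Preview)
Your approach is essentially the paper's: realize $V$ as $V_0\otimes W$, use Schur's lemma to factor the invariant form as $B_0\otimes B_W$, and identify $S_\psi$ with a classical group acting on the multiplicity space $W$. The parity analysis for $B_W$ and the Weyl-group computation for $O(m)$ versus $SO(m)$ are exactly what the paper does.

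There is one genuine gap. In case~(ii) you assert $S_\psi\cong O(m,\mathbb{C})$ in all cases, but intersecting with $\hat G$ means more than preserving $B$: when $\hat G=SO(2n+1,\mathbb{C})$ (that is, $G=Sp(2n,F)$) you must also impose $\det=1$. Since $\det(g\otimes I_{d_0})=(\det g)^{d_0}$, this is a nontrivial constraint precisely when $d_0$ is odd, and in that situation $S_\psi\cong SO(m,\mathbb{C})$ rather than $O(m,\mathbb{C})$. The paper handles this by observing that for $G=Sp(2n,F)$ one has $md_0=2n+1$, so $m$ and $d_0$ are both odd; the determinant condition then cuts $S_\psi$ down to $SO(m,\mathbb{C})$, which is connected, and $R_\psi=1$. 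Your conclusion $R_\psi=1$ survives because $m$ is forced to be odd here, but your stated $S_\psi$ is wrong in this case, and your argument as written does not explain why $m$ even cannot occur for $G=Sp(2n,F)$. You should treat the three ambient groups $\hat G=Sp(2n,\mathbb{C})$, $SO(2n+1,\mathbb{C})$, $O(2n,\mathbb{C})$ separately at the point where you pass from the full isometry group of $B_W$ to $S_\psi$.
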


\begin{proof} 
(i) The proof of the statement is the same as in \cite{GrP}.

(ii) and (iii): Suppose $G=Sp(2n,F)$ or $SO(2n+1,F)$. 
Let $V$ and $V_0$ denote the spaces of the representations $\psi$ and
$\psi_0$, respectively. Denote by $\langle\,, \rangle$ the $\psi$-invariant
bilinear form on $V$ and by $\langle\, , \rangle_0$
the $\psi_0$-invariant bilinear form on $V_0$.
There exists an isomorphism $V \to V_0 \oplus \cdots \oplus V_0$.
Equivalently, $V \cong W \otimes V_0$, where $W$ is a finite dimensional vector space
with trivial $W_F \times SL(2, {\mathbb C}) \times SL(2, {\mathbb C})$-action. 
The space $W$ can be identified with 
$
{\rm Hom}_{W_F \times SL(2, {\mathbb C}) \times SL(2, {\mathbb C})}(V_0,V).
$
Then the map $W \otimes V_0 \to V$ is 
\[
     \ell \otimes v \mapsto \ell(v), \quad
   \ell \in {\rm Hom}_{W_F \times SL(2, {\mathbb C}) \times SL(2, {\mathbb C})}(V_0,V), v \in V_0.
\]
We claim there exists a nondegenerate bilinear form $\langle\, , \rangle_W$ on $W$ such that
$
   \langle\, , \rangle = \langle\, , \rangle_W \otimes \langle\, , \rangle_0
$
in the sense that
\[
   \langle \ell_1 \otimes v_1, \ell_2 \otimes v_2 \rangle = 
   \langle \ell_1, \ell_2 \rangle_W  \langle v_1, v_2\rangle_0, \quad \forall 
   \ell_1, \ell_2 \in W, \, v_1, v_2 \in V_0.
\]
The key ingredient is Schur's lemma, or rather, the variant which says that every invariant bilinear 
form on $V_0$ is a scalar multiple of $\langle\, , \rangle_0$.
Given any two 
$
 \ell_1, \ell_2 \in {\rm Hom}_{W_F \times SL(2, {\mathbb C}) \times SL(2, {\mathbb C})}(V_0,V),
$
\[
   \langle \ell_1(v_1), \ell_2(v_2)  \rangle
\]
is an invariant bilinear form on $V_0$ and therefore it is equal to $c \langle\, , \rangle_0$,
for some constant $c$. We can define $\langle \ell_1, \ell_2 \rangle_W$ by
\[
   \langle \ell_1, \ell_2 \rangle_W = 
   \frac{\langle \ell_1(v_1), \ell_2(v_2)  \rangle}{ \langle v_1, v_2\rangle_0}
\]
because Schur's lemma tells us that the right-hand side is independent of $v_1, v_2 \in V_0$.
This proves the claim. Observe that if $\psi_0$
is not of the same type as $\psi$, the form $\langle\, , \rangle_W$ is alternating,
while in the case when $\psi_0$ and $\psi$ are of the same type, the form 
$\langle\, , \rangle_W$ is symmetric.

Now, $ \im \psi =\{ I_m \otimes g \mid g \in \im \psi_0 \}$ and
\[
  \begin{aligned}
   Z_{GL(N, \mathbb{C})} (\im \psi) &= \{ g \otimes z \mid g \in GL(m, \mathbb{C}), 
         z \in \{ \lambda I_{d_0} \mid \lambda \in \mathbb{C}^\times \} \} \\
     &= \{ g \otimes I_{d_0} \mid g \in GL(m, \mathbb{C}) \}.
  \end{aligned}
\]
Let us denote by $\mathcal{W}$ the group of matrices in $GL(W)$ which preserve $\langle\, , \rangle_W$,
i.e., $\mathcal{W}= Sp(m,\mathbb{C})$ if $\langle\, , \rangle_W$ is an alternating form and
$\mathcal{W}= O(m,\mathbb{C})$ if $\langle\, , \rangle_W$ is a symmetric form.
Then
\[
   S_\psi = Z_{GL(N, \mathbb{C})} (\im \psi) \cap \hat{G} =
     \{ g \otimes I_{d_0} \mid g \in \mathcal{W}, \, \det(g \otimes I_{d_0})=1 \}.
\]
It follows that in case (iii) we have $S_\psi \cong Sp(m,\mathbb{C})$, $S_\psi^0 =S_\psi$ and 
$R_\psi = 1$.

In case (ii), $\mathcal{W}= O(m,\mathbb{C})$. Since $\det(g \otimes I_{d_0}) = (\det g)^{d_0}$,
it follows 
\[
     S_\psi \cong \begin{cases}
         O(m, {\mathbb C}), &  d_0 \text{ even, }\\
         SO(m, {\mathbb C}), &  d_0 \text{ odd }.
    \end{cases}
\]
In the case $G=SO(2n+1,F)$, $\psi_0$ is symplectic and $d_0$ is even.
Then $S_\psi \cong O(m,\mathbb{C})$ and $S_\psi^0 \cong SO(m,\mathbb{C})$.
If $m$ is even, this implies $R_\psi \cong \mathbb{Z}_2$.
For $m$ odd, $W_\psi = W_\psi^0$ and $R_\psi=1$.

In the case $G=Sp(2n,F)$, we have $ \hat{G} = SO(2n+1,\mathbb{C})$ and $md_0 = 2n+1$.
It follows that $m$ and $d_0$ are both odd. Then $S_\psi \cong SO(m,\mathbb{C})$, 
$S_\psi^0 =S_\psi$ and 
$R_\psi = 1$.

The case $G=O(2n,F)$ is similar, but simpler, because there is no condition on determinant.
 It follows $S_\psi \cong O(m,\mathbb{C})$. This implies 
$R_\psi \cong \mathbb{Z}_2$ for $m$ even and $R_\psi=1$ for $m$ odd.
\end{proof}

\begin{lemma}\label{R_psi}

Let $G$ be $Sp(2n,F)$, $SO(2n+1,F)$ or $O(2n,F)$. Let 
$
\psi: W_F \times SL(2, {\mathbb C}) \times SL(2, {\mathbb C}) \to \hat{G}
$
be an $A$-parameter. We can write $\psi$ in the form
\begin{equation}\label{psi} 
   \begin{aligned}
     \psi \cong \left( \bigoplus_{i=1}^p( m_i \psi_i \oplus m_i \tilde{\psi}_i)\right) &\oplus 
       \left( \bigoplus_{i=p+1}^q 2m_i \psi_i \right)\\  
  &\oplus 
      \left( \bigoplus_{i=q+1}^r (2m_i+1) \psi_i \right) \oplus  
     \left( \bigoplus_{i=r+1}^s 2m_i \psi_i \right)
   \end{aligned}
\end{equation}
where $\psi_i$ is irreducible, for $ i \in \{1, \dots, s \}$,
and
\[
    \begin{array}{c l}
     \psi_i \ncong \psi_j, \, \psi_i \ncong \tilde{\psi}_j, & \text{ for }
      i,j \in \{1, \dots, s \}, \, i \ne j, \\
     \psi_i \ncong \tilde{\psi}_i, & \text{ for }
      i \in \{1, \dots, p \}, \\
     \psi_i \cong \tilde{\psi}_i, & \text{ for }
      i \in \{p+1, \dots, s \},\\
     \psi_i \text{ not of the same type as }  \hat{G}, &  \text{ for }
      i \in \{p+1, \dots, q \},\\
     \psi_i \text{ of the same type as }  \hat{G}, &  \text{ for }
      i \in \{q+1, \dots, s \}.
    \end{array}
\]
Let $d=s-r$. Then
\[
      R_\psi \cong \mathbb{Z}_2^d.
\]

\end{lemma}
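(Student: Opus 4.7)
The plan is to extend the single-block analysis of Lemma~\ref{GP} to the multi-block setting by working isotypically, with an extra step to handle the determinant-one condition that enters in the $Sp(2n,F)$ case. First, the isotypic decomposition of $V$ gives
\[
   Z_{GL(N,\mathbb{C})}(\im\psi) \cong \prod_{i=1}^{p} \bigl(GL(m_i,\mathbb{C}) \times GL(m_i,\mathbb{C})\bigr) \times \prod_{i=p+1}^{s} GL(N_i,\mathbb{C}),
\]
where $N_i$ denotes the multiplicity of $\psi_i$. The intersection with the bilinear form on $\hat G$ can be carried out one isotypic block at a time, and the proof of Lemma~\ref{GP} gives a direct product $\tilde S_\psi$ whose factors are $GL(m_i,\mathbb{C})$ for $i \in \{1,\dots,p\}$ (part~(i)), $Sp(2m_i,\mathbb{C})$ for $i \in \{p+1,\dots,q\}$ (part~(iii)), and $O(N_i,\mathbb{C})$ for $i \in \{q+1,\dots,s\}$ (part~(ii)). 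For $G = O(2n,F)$ and $G = SO(2n+1,F)$ one has $S_\psi = \tilde S_\psi$, since in these cases $\hat G$ is defined by preserving only a bilinear form. For $G = Sp(2n,F)$, where $\hat G = SO(2n+1,\mathbb{C})$, one must additionally impose $\det = 1$, making $S_\psi$ a subgroup of index at most two in $\tilde S_\psi$.

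Since Weyl groups and identity components commute with direct products, the $R$-group of $\tilde S_\psi$ is the product of the single-block contributions read off from Lemma~\ref{GP}. Blocks of types (i) and (iii), and same-type blocks of odd multiplicity, each contribute trivially (the extra component of $O(N_i)$ with $N_i$ odd is generated by the central $\{\pm I\}$, which produces no new Weyl class), while each same-type block of even multiplicity contributes a copy of $\mathbb{Z}_2$. Therefore $W_{\tilde S_\psi}/W_{\tilde S_\psi}^0 \cong \mathbb{Z}_2^{s-r} = \mathbb{Z}_2^d$.

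The main technical step is then to verify that in the $Sp(2n,F)$ case the passage $\tilde S_\psi \supset S_\psi$ does not alter the $R$-group. Since $\tilde S_\psi^0$ is connected and lies in $SL(N,\mathbb{C})$, it is contained in $S_\psi$, so $S_\psi^0 = \tilde S_\psi^0$ and a common maximal torus $T$ may be chosen; in particular $W_{S_\psi}^0 = W_{\tilde S_\psi}^0$. The inclusion $W_{S_\psi} \hookrightarrow W_{\tilde S_\psi}$ is surjective: any $n \in N_{\tilde S_\psi}(T)$ with $\det n = -1$ can be adjusted to lie in $S_\psi$ by multiplication with a $z \in Z_{\tilde S_\psi}(T)$ of determinant~$-1$, without changing its image in $W_{\tilde S_\psi}$. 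Such a $z$ exists by a parity argument: all block dimensions are even except those of blocks $(2m_i+1)\psi_i$ with $d_i$ odd, so the congruence $2n+1 = N \equiv \sum_{i=q+1}^{r} d_i \pmod 2$ forces at least one $i \in \{q+1,\dots,r\}$ to have $d_i$ odd, and then $-I_{N_i}$ in the corresponding $O(N_i)$ factor contributes $(-1)^{d_i} = -1$ to the global determinant. This yields $W_{S_\psi} = W_{\tilde S_\psi}$ and hence $R_\psi \cong \mathbb{Z}_2^d$, as claimed.
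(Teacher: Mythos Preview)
Your proof is correct and follows essentially the same strategy as the paper: decompose the centralizer isotypically, invoke Lemma~\ref{GP} block by block, and use the parity argument $2n+1 \equiv \sum_{i=q+1}^{r} d_i \pmod 2$ to handle the extra $\det = 1$ condition in the $Sp(2n,F)$ case. The only cosmetic difference is that the paper absorbs the determinant condition directly into the description of $S_\psi$ (replacing one $O(2m_{q+1}+1,\mathbb{C})$ factor by $SO(2m_{q+1}+1,\mathbb{C})$), whereas you keep the full product $\tilde S_\psi$ and argue at the level of Weyl groups that adjusting by a central $-I$ in an odd-dimensional orthogonal factor shows $W_{S_\psi} = W_{\tilde S_\psi}$; the content is the same.
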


\begin{proof}

Set
    $\Psi_i = m_i\psi_i \oplus m_i \tilde{\psi}_i$, for $i \in \{1, \dots, p \},$
 and
    $\Psi_i = m_i\psi_i$, for $i \in \{p+1, \dots, s \}.$
Denote by $Z_i$ the centralizer of the image of $\Psi_i$ in the corresponding $GL$.
Then
\[
   Z_{GL(N, \mathbb{C})} (\im \psi)  = Z_1 \times \cdots \times Z_s
\quad \text{ and } \quad
 S_\psi = Z_{GL(N, \mathbb{C})} (\im \psi) \cap \hat{G}.
\]
Lemma~\ref{GP} tells us  the factors corresponding to $i \in \{1, \dots, q \}$ do not
contribute to $R_\psi$. In addition, we can see from the proof of Lemma~\ref{GP}
that these factors do not appear in determinant considerations.
Therefore, we can consider only the factors corresponding to $i \in \{q+1, \dots, s \}$.
 Let 
$
    \mathcal{Z}= Z_{q+1} \times \cdots \times Z_s
$
and 
$
   \mathcal{S} = \mathcal{Z} \cap \hat{G}.
$
In the same way as in the proof of Lemma~\ref{GP}, we obtain
\begin{equation}\label{E}
    \mathcal{S} \cong \left\{ (g_{q+1}, \dots, g_s) \mid \begin{array}{c}
g_i \in O(2m_i+1, \mathbb{C}), 
  i \in \{q+1, \dots, r \}, \\ g_i \in O(2m_i, \mathbb{C}),   i \in \{r+1, \dots, s \},\\
 \prod_{i=q+1}^s ( \det g_i)^{\dim \psi_i} =1
   \end{array} \right\},
\end{equation}
for $G=SO(2n+1,F)$ or $Sp(2n,F)$. For $G=O(2n,F)$, we omit the condition on determinant.
If $G=SO(2n+1,F)$, then for $i \in \{q+1, \dots, s \}$, $\psi_i$ is symplectic
and $\dim \psi_i$ is even. Therefore, the product in (\ref{E}) is always equal to 1.

Now, for $G=SO(2n+1,F)$ and $G=O(2n,F)$, we have
\[
  \mathcal{S} \cong \prod_{i=q+1}^r O(2m_i+1, \mathbb{C}) \times \prod_{i=r+1}^s O(2m_i, \mathbb{C}).
\]
It follows
$
   R_\psi \cong \prod_{i=q+1}^r 1 \times \prod_{i=r+1}^s \mathbb{Z}_2 \cong \mathbb{Z}_2^d.
$

It remains to consider $G=Sp(2n,F)$, $\hat{G}=SO(2n+1, \mathbb{C})$.
Observe that we have
\[
    \sum_{i=1}^q 2m_i \dim \psi_i   + \sum_{i=q+1}^r (2m_i+1)\dim \psi_i  
      + \sum_{i=1}^p 2m_i \dim \psi_i = 2n+1.
\]
Since the total sum is odd, we must have $r > q$ and $\dim \psi_i$ odd, for some 
$i \in \{q+1, \dots, r \}$. Without loss of generality, we may assume $\dim \psi_{q+1}$ odd.
Then
\[
  \mathcal{S} \cong SO(2m_{q+1}+1, \mathbb{C}) \times
     \prod_{i=q+2}^r O(2m_i+1, \mathbb{C}) \times \prod_{i=r+1}^s O(2m_i, \mathbb{C}).
\]
It follows
$
   R_\psi \cong 1 \times \prod_{i=q+2}^r 1 \times \prod_{i=r+1}^s \mathbb{Z}_2 \cong \mathbb{Z}_2^d.
$
\end{proof}

\section{Even orthogonal groups}\label{orthogonal}

\subsection{$R$-groups for non-connected groups}

In this section, we review some results of \cite{GH}.
Let $G$ be a reductive $F$-group. Let $G^0$ be the connected component of the identity in $G$.
We assume that $G/G^0$ is finite and abelian.

Let $\pi$ be an irreducible unitary representation of $G$.
We say that $\pi$ is discrete series if the matrix coefficients of $\pi$ are square
integrable modulo the center of $G$.

We will consider the parabolic subgroups and the $R$-groups as defined in \cite{GH}.
Let $P^0 = M^0U$ be a parabolic subgroup of $G^0$. Let $A$ be the split component in the 
center of $M^0$. Define $M = C_G(A)$ and $P=MU$. Then $P$ is called the cuspidal parabolic subgroup 
of $G$ lying over $P^0$.
The Lie algebra $\mathcal{L}(G)$ can be decomposed into root spaces with respect to 
the roots $\Phi$ of $\mathcal{L}(A)$
\[
     \mathcal{L}(G) = \mathcal{L}(M) \oplus \sum_{\alpha \in \Phi} \mathcal{L}(G)_\alpha.
\]
Let $\sigma$ be an irreducible unitary representation of $M$. 
We denote by $r_{M^0,M}(\sigma)$ the restriction of $\sigma$ to $M^0$.
Then (\cite{GH}, Lemma 2.21)
$\sigma$ is discrete series if and only if any irreducible constituent of $r_{M^0,M}(\sigma)$
is discrete series. Now, suppose $\sigma$ is discrete series. 
Let $\sigma_0$ be an irreducible constituent of $r_{M^0,M}(\sigma)$. Then $\sigma_0$
is discrete series and we have the Knapp-Stein $R$-group $R(\sigma_0)$ for $i_{G^0,M^0}(\sigma_0)$
(\cite{Kn-St, Sil2}).
We review the definition of $R(\sigma_0)$. Let
$
   W(G^0, A) = N_{G^0}(A)/M^0
$
and
$
    W_{G^0}(\sigma_0) =\{ w \in W_G(M) \mid w\sigma_0 \cong \sigma_0 \}.
$
For $w \in W_{G^0}(\sigma_0)$, we denote by $\mathcal{A}(w, \sigma_0)$ the normalized standard 
intertwining operator associated to $w$ (see \cite{Sil1}). 
Define
\[
  W_{G^0}^0(\sigma_0) =\{ w \in W_{G^0}(\sigma_0) \mid \mathcal{A}(w, \sigma_0) \text{ is a scalar}\}.
\]
Then $W_{G^0}^0(\sigma_0) = W(\Phi_1)$ is generated by reflections in a set $\Phi_1$
of reduced roots of $(G,A)$. Let $\Phi^+$ be the positive system of reduced roots of $(G,A)$
determined by $P$ and let $\Phi_1^+= \Phi_1 \cap \Phi^+$.
Then
\[
    R(\sigma_0) = \{ w \in W_{G^0}(\sigma_0) \mid w\beta \in \Phi^+ \text{ for all } 
     \beta \in \Phi_1^+ \}
\]
and $W_{G^0}(\sigma_0) = R(\sigma_0) \ltimes W(\Phi_1)$.

For definition of $R(\sigma)$, we follow \cite{GH}.
Define
$
   N_G(\sigma) =\{ g \in N_G(M) \mid g\sigma \cong \sigma \},
$
$
   W_G(\sigma) = N_G(\sigma)/M
$
and 
\[
    R(\sigma) = \{ w \in W_{G}(\sigma) \mid w\beta \in \Phi^+ \text{ for all } 
     \beta \in \Phi_1^+ \}.
\]
For $w \in W_{G}(\sigma)$, let $\mathcal{A}(w, \sigma)$ denote the intertwining operator 
on $i_{G,M}(\sigma)$ defined on page 135 of \cite{GH}.
Then the $\mathcal{A}(w, \sigma)$, $w \in R(\sigma)$, form a basis for the algebra of 
intertwining operators on $i_{G,M}(\sigma)$ (\cite{GH}, Theorem 5.16.).
In addition, $W_{G}(\sigma) = R(\sigma) \ltimes W(\Phi_1)$.
For $w \in W_{G}(\sigma)$, $\mathcal{A}(w, \sigma)$ is a scalar if and only if $w \in W(\Phi_1)$
(\cite{GH}, Lemma 5.20).

\subsection{Even orthogonal groups}

Let $G=O(2n,F)$ and $G^0=SO(2n,F)$. Then $G = G^0 \rtimes \{ 1, s\}$,
where
$
   s = \diag ( I_{n-1}, \left( \begin{matrix}
        0 & 1 \\
        1 & 0  
       \end{matrix} \right), I_{n-1})
$
and it acts on $G^0$ by conjugation.

a) Let
\[
\begin{aligned}
   M^0 & = \{ \diag (g_1, \dots, g_r, h , \, ^\tau g_r^{-1}, \dots, \, ^\tau g_1^{-1}) \mid
    g_i \in GL(n_i,F), h \in SO(2m,F) \} \\
  & \cong GL(n_1,F) \times \cdots \times GL(n_r,F) \times SO(2m,F),
\end{aligned} 
\]
where $m>1$ and $n_1 + \cdots + n_r + m =n$. Then $M^0$ is a Levi subgroup of $G^0$.
The split component of $M^0$ is
\[
   A = \{ \diag (\lambda_1 I_{n_1}, \dots, \lambda_r I_{n_r}, I_{2m} ,  
      \lambda_r^{-1} I_{n_r}, \dots, \lambda_1^{-1} I_{n_1})
\mid
    \lambda_i \in F^\times \}.
\]
Then $M= C_G(A)$  is equal to
\begin{equation}\label{M1}
\begin{aligned}
   M & = \{ \diag (g_1, \dots, g_r, h , \, ^\tau g_r^{-1}, \dots, \, ^\tau g_1^{-1}) \mid
    g_i \in GL(n_i,F), h \in O(2m,F) \} \\
  & \cong GL(n_1,F) \times \cdots \times GL(n_r,F) \times O(2m,F).
\end{aligned} 
\end{equation}
Let $\pi \in \mathcal{E}_2(M)$.
Then 
$
   \pi \cong \rho_1 \otimes \cdots \otimes \rho_k \otimes \sigma,
$
where $\rho_i \in \mathcal{E}_2(GL(n_i,F))$ and $ \sigma \in \mathcal{E}_2(O(2m,F))$.
Let 
$
   \pi_0 \cong \rho_1 \otimes \cdots \otimes \rho_k \otimes \sigma_0
$ 
be an irreducible component of $r_{M^0,M}(\pi)$.
If $s \sigma_0 \cong \sigma_0$, then $W_G(\pi)= W_{G^0}(\pi_0)$ and 
$R(\pi) = R(\pi_0)$. In this case, $r_{M^0,M}(\pi) = \pi_0$ (\cite{BJ}, Lemma 4.1)
and $\rho_i \rtimes \sigma$ is reducible
if and only if $\rho_i \rtimes \sigma_0$ is reducible (Proposition 2.2 of \cite{G2}).
Then Theorem 6.5 of \cite{G1} tells us that $R(\pi) \cong \mathbb{Z}_2^d$,
where $d$ is the number of inequivalent $\rho_i$ with $\rho_i \rtimes \sigma$ reducible.

Now, consider the case $s \sigma_0 \ncong \sigma_0$. It follows from Lemma 4.1 of 
\cite{BJ} that $\pi = i_{M,M^0}(\pi_0)$. Then $i_{G,M}(\pi) = i_{G,M^0}(\pi_0)$
and we know from Theorem 3.3 of \cite{G2} that $R(\pi) \cong \mathbb{Z}_2^d$,
 where $d=d_1+d_2$, $d_1$ is the number of inequivalent $\rho_i$
such that $n_i$ is even and $\rho_i \rtimes \sigma$ is reducible, and
$d_2$ is the number of inequivalent $\rho_i$
such that $n_i$ is odd and $\rho_i \cong \tilde{\rho}_i$.
Moreover,  Corollary 3.4 of  \cite{G2} implies if $n_i$ is odd and 
$\rho_i \cong \tilde{\rho}_i$, then 
$\rho_i \rtimes \sigma$ is reducible. Therefore, we see that 
$R(\pi) \cong \mathbb{Z}_2^d$,
where $d$ is the number of inequivalent $\rho_i$ with $\rho_i \rtimes \sigma$ reducible.

In the case $m=1$, since 
$
SO(2,F) = \{ \left( \begin{matrix}
        a & 0 \\
        0 & a^{-1}  
       \end{matrix} \right) \mid a \in F^\times \},
$
we have
\[
\begin{aligned}
   M^0 & = \{ \diag (g_1, \dots, g_r, a, a^{-1} , \, ^\tau g_r^{-1}, \dots, \, ^\tau g_1^{-1}) \mid
    g_i \in GL(n_i,F), a \in F^\times  \} \\
  & \cong GL(n_1,F) \times \cdots \times GL(n_r,F) \times GL(1,F),
\end{aligned} 
\]
and this case is described in b).

b) Let $M^0$ be a Levi subgroup of $G^0$ of the form
\[
   M^0  = \{ \diag (g_1, \dots, g_r, \, ^\tau g_r^{-1}, \dots, \, ^\tau g_1^{-1}) \mid
    g_i \in GL(n_i,F) \}   
\]
where $n_1 + \cdots + n_r =n$. 
The split component of $M^0$ is
\[
   A = \{ \diag (\lambda_1 I_{n_1}, \dots, \lambda_r I_{n_r},   
      \lambda_r^{-1} I_{n_r}, \dots, \lambda_1^{-1} I_{n_1})
\mid
    \lambda_i \in F^\times \}
\]
and $M= C_G(A) = M^0$. Therefore, 
\begin{equation}\label{M2}
\begin{aligned}
   M & = \{ \diag (g_1, \dots, g_r, \, ^\tau g_r^{-1}, \dots, \, ^\tau g_1^{-1}) \mid
    g_i \in GL(n_i,F) \} \\
  & \cong GL(n_1,F) \times \cdots \times GL(n_r,F).
\end{aligned} 
\end{equation}
Let
$
   \pi \cong \rho_1 \otimes \cdots \otimes \rho_k \otimes 1 \in \mathcal{E}_2(M),
$
where 1 denotes the trivial representation of the trivial group. Since $M=M^0$,
we can apply directly Theorem 3.3 of \cite{G2}. It follows 
$R(\pi) \cong \mathbb{Z}_2^d$,
 where $d=d_1+d_2$, $d_1$ is the number of inequivalent $\rho_i$
such that $n_i$ is even and $\rho_i \rtimes 1$ is reducible, and
$d_2$ is the number of inequivalent $\rho_i$
such that $n_i$ is odd and $\rho_i \cong \tilde{\rho}_i$.
As above, it follows from Corollary 3.4 of  \cite{G2} that if $n_i$ is odd and 
$\rho_i \cong \tilde{\rho}_i$, then 
$\rho_i \rtimes \sigma$ is reducible. 
Again, we obtain
$R(\pi) \cong \mathbb{Z}_2^d$,
where $d$ is the number of inequivalent $\rho_i$ with $\rho_i \rtimes \sigma$ reducible.

We summarize the above considerations in the following lemma.
Observe that the group $O(2,F)$ does not have square integrable representations.
It also does not appear as a factor of cuspidal Levi subgroups of $O(2n,F)$.
We call a subgroup $M$ defined by (\ref{M1}) or (\ref{M2}) a standard Levi subgroup of $O(2n,F)$.

\begin{lemma}\label{Rorth} 
Let $G = O(2n,F)$ and let 
$
  M \cong GL(n_1,F) \times \cdots \times GL(n_r,F) \times O(2m,F),
$
where $m \geq 0$, $m \ne 1$, $n_1 + \cdots + n_r + m =n$, be a standard Levi subgroup of $G$.
Let
$
   \pi \cong \rho_1 \otimes \cdots \otimes \rho_k \otimes \sigma \in \mathcal{E}_2(M).
$
Then $R(\pi) \cong \mathbb{Z}_2^d$,
where $d$ is the number of inequivalent $\rho_i$ with $\rho_i \rtimes \sigma$ reducible.

\end{lemma}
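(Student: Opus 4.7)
The plan is to observe that the lemma is really a clean repackaging of the four sub-cases already analyzed in the discussion preceding it. I would organize the proof by the structural split of the standard Levi: either $M$ contains a genuine even orthogonal factor ($m\geq 2$), giving form (\ref{M1}), or $M$ is a pure product of general linear groups ($m=0$), giving form (\ref{M2}). The hypothesis $m\neq 1$ is precisely what lets one avoid the degenerate case where an ``$SO(2,F)$'' factor should be absorbed into case (\ref{M2}) as a $GL(1,F)$.

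In the case $m\geq 2$, pick $\pi_0\cong\rho_1\otimes\cdots\otimes\rho_k\otimes\sigma_0$, an irreducible constituent of $r_{M^0,M}(\pi)$, and split on whether $s\sigma_0\cong\sigma_0$. If $s\sigma_0\cong\sigma_0$, then by Lemma 4.1 of \cite{BJ} the restriction is irreducible and equals $\pi_0$, so the $O(2n,F)$ and $SO(2n,F)$ Weyl data coincide, $R(\pi)=R(\pi_0)$, and I can invoke Theorem~6.5 of \cite{G1} directly, translating the reducibility criterion for $\rho_i\rtimes\sigma_0$ back to the $O$-criterion via Proposition~2.2 of \cite{G2}. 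If instead $s\sigma_0\not\cong\sigma_0$, then $\pi=i_{M,M^0}(\pi_0)$, induction in stages gives $i_{G,M}(\pi)=i_{G,M^0}(\pi_0)$, and Theorem~3.3 of \cite{G2} yields $R(\pi)\cong\mathbb{Z}_2^{d_1+d_2}$, where the parity of $n_i$ intervenes; Corollary~3.4 of \cite{G2} then collapses the ``odd $n_i$ with $\rho_i\cong\tilde\rho_i$'' contribution back into the single condition ``$\rho_i\rtimes\sigma$ reducible.''

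In the case $m=0$, one has $M=M^0$, so Theorem~3.3 of \cite{G2} applies verbatim, and the same appeal to Corollary~3.4 of \cite{G2} harmonizes the two partial counts $d_1,d_2$ into the single reducibility criterion. In both cases the final answer is $R(\pi)\cong\mathbb{Z}_2^{d}$ with $d$ counting inequivalent $\rho_i$ such that $\rho_i\rtimes\sigma$ reduces.

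The only mildly subtle point -- and the main thing to be careful about -- is the bookkeeping in the $s\sigma_0\not\cong\sigma_0$ branch: one must match the parities of the $n_i$ that appear in Theorem~3.3 of \cite{G2} against the $O$-reducibility condition, and confirm via Corollary~3.4 that no $\rho_i$ is lost or double-counted when passing from the $SO$-picture to the $O$-picture. Once that matching is in place, the uniform statement in the lemma falls out, and no separate argument is needed for the three possible $\hat G$'s since the whole discussion takes place inside $O(2n,F)$.
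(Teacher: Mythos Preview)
Your proposal is correct and follows essentially the same approach as the paper: the lemma is indeed stated there explicitly as a summary of the preceding case analysis, and your organization into the two structural cases (\ref{M1}) and (\ref{M2}), the split on $s\sigma_0\cong\sigma_0$ versus $s\sigma_0\not\cong\sigma_0$, and the exact citations (Lemma~4.1 of \cite{BJ}, Theorem~6.5 of \cite{G1}, Proposition~2.2, Theorem~3.3, and Corollary~3.4 of \cite{G2}) all match the paper's own argument.
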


\section{$R$-groups of discrete series}

Let $G$ be $Sp(2n,F)$, $SO(2n+1,F)$ or $O(2n,F)$.

\begin{theorem}
 
Let $\pi$ be an irreducible discrete series representation of a standard Levi subgroup
$M$ of $G_n$. Let $\varphi$ be the $L$-parameter of $\pi$. Then $R_{\varphi, \pi} \cong R(\pi)$.

\end{theorem}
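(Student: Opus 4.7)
The plan is to compute $R_{\varphi,\pi}$ explicitly via the centralizer machinery of Section~3 and to match it, term by term, with the Knapp--Stein count of Lemma~\ref{Rorth} (together with its analogues for $Sp(2n)$ and $SO(2n+1)$ from \cite{G1}). First, I parse $\pi \cong \rho_1 \otimes \cdots \otimes \rho_k \otimes \sigma$ with $\rho_i = \delta(\tau_i, b_i) \in \mathcal{E}_2(GL(n_i,F))$ and $\sigma \in \mathcal{E}_2(G_m)$ (or $\sigma$ trivial, in case (b) of Section~\ref{orthogonal}). Composing the parameter of $\pi$ with $^L\!M \hookrightarrow {}^L\!G$ and using (\ref{Lsigma}) for $\sigma$, one obtains
\[
\varphi \;\cong\; \bigoplus_{i=1}^k (\varphi_{\tau_i}\otimes S_{b_i}) \;\oplus\; \bigoplus_{(\rho,a)\in\Jord(\sigma)} \varphi_\rho\otimes S_a \;\oplus\; \bigoplus_{i=1}^k (\varphi_{\tau_i}\otimes S_{b_i})^{\vee}.
\]

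Second, I regroup these summands into the canonical form of Lemma~\ref{R_psi} and read off $R_\varphi$. An irreducible component $\varphi_{\tau_i}\otimes S_{b_i}$ (for $\tau_i$ self-dual) appears with multiplicity $2$ from the paired $GL$ contributions, plus $1$ more if $(\tau_i,b_i)\in\Jord(\sigma)$; the argument is parallel if several of the $\rho_j$'s share supercuspidal support. By Lemma~\ref{R_psi}, $R_\varphi\cong\mathbb Z_2^D$ where $D$ counts the inequivalent self-dual constituents $\varphi_{\tau_i}\otimes S_{b_i}$ that are of the same type as $\hat G$ and that occur with even total multiplicity in $\varphi$ --- equivalently, those with $\tau_i\otimes S_{b_i}$ of the correct type and $(\tau_i,b_i)\notin\Jord(\sigma)$. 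By Lemma~\ref{J-1} together with the M{\oe}glin--Tadi\'c criterion, this last condition is equivalent to reducibility of $\rho_i\rtimes\sigma$, so $D$ agrees with the integer $d$ of Lemma~\ref{Rorth} (and its $Sp(2n)$/$SO(2n+1)$ analogues).

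Third, I need to verify $W_{\varphi,\pi}=W_\varphi$ and $W_{\varphi,\pi}^0=W_\varphi^0$, whence $R_{\varphi,\pi}=R_\varphi\cong\mathbb Z_2^d\cong R(\pi)$. Under the identification of $W_\varphi$ with a subgroup of $W(G,A)$ (Lemma~2.3 of \cite{BZ}), each $\mathbb Z_2$-generator of $R_\varphi$ lifts to the Weyl element that negates a single self-dual block $\rho_i$; since $\rho_i\cong\rho_i^\vee$, such an element fixes $\pi$ up to isomorphism. The reflections generating $W_\varphi^0$ correspond either to permutations of equivalent $\rho_j$'s or to sign changes on non-self-dual pairs $\{\rho_i,\rho_i^\vee\}$, both of which also stabilize $\pi$. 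The main obstacle lies precisely here: one must exhibit explicit Weyl representatives for the centralizer generators, and in the $O(2n,F)$ case one must further split into the two cases $s\sigma_0\cong\sigma_0$ and $s\sigma_0\not\cong\sigma_0$ from Section~\ref{orthogonal}, verifying that the chosen Weyl elements stabilize $\sigma$ itself and not merely the $A$-packet $\Pi_{\varphi_\sigma}(G_m)$ in which $\sigma$ sits.
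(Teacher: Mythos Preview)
Your approach is correct and matches the paper's proof: decompose $\varphi$ into irreducibles, apply Lemma~\ref{R_psi} to read off $R_\varphi\cong\mathbb Z_2^d$, and match $d$ with the Knapp--Stein count via Lemma~\ref{J-1}. The worry you flag at the end is not an obstacle (the paper in fact simply asserts $R_{\varphi,\pi}\cong R_\varphi$ without further comment): under the identification of $W_\varphi$ with a subgroup of $W(G,A)$ from \cite{BZ}, every element acts on $A$ only through the $GL$-factors of $M$, by permutation and sign change, while fixing the $G_m$-slot, so it stabilizes $\sigma$ itself and no case split on $s\sigma_0$ is needed.
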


\begin{proof}

We can write $\pi$ in the form
\begin{equation}\label{pi}
    \pi \cong (\otimes^{m_1}\delta_1) \otimes \cdots \otimes (\otimes^{m_r}\delta_r) 
      \otimes \sigma
\end{equation}
where $\sigma$ is an irreducible discrete series representation of $G_m$ and
$\delta_i$ ($i=1, \dots, r$) is an irreducible discrete series representation of $GL(n_i,F)$
such that
$\delta_i \ncong \delta_j$,  for $i \ne j$.
As explained in Section~\ref{orthogonal}, if $G_n = O(2n,F)$, then $m \ne 1$.

Let $\varphi_i$ denote the $L$-parameter of $\delta_i$ and $\varphi_\sigma$
the $L$-parameter of $\sigma$. Then the $L$-parameter $\varphi$ of $\pi$ is
\[
     \varphi \cong ( m_1 \varphi_1 \oplus m_1 \tilde{\varphi}_1) \oplus \cdots \oplus 
        (m_r \varphi_r \oplus m_r \tilde{\varphi}_r) \oplus \varphi_\sigma .
\]
Each $\varphi_i$ is irreducible. The parameter $\varphi_\sigma$ is of the form
$
   \varphi_\sigma = \varphi_1' \oplus \cdots \oplus \varphi_s'
$
where $\varphi_i'$ are irreducible, $\varphi_i' \cong \tilde{\varphi}_i'$ and 
$\varphi_i' \ncong \varphi_i'$ for $i \ne j$.
In addition, $\varphi_i'$ factors through a group of the same type as $\hat{G}_n$.
The sets $\{ \varphi_i \mid i=1, \dots, r \}$ and $\{ \varphi_i' \mid i=1, \dots, s \}$
can have nonempty intersection.
After rearranging the indices, we can write $\varphi$ as
\[
 \begin{aligned}
     \varphi \cong \left( \bigoplus_{i=1}^h( m_i \varphi_i \oplus m_i \tilde{\varphi}_i)\right) \oplus 
       \left( \bigoplus_{i=h+1}^q 2m_i \varphi_i \right) &\oplus 
     \left( \bigoplus_{i=q+1}^k 2m_i \varphi_i \right)\\
  &\oplus 
      \left( \bigoplus_{i=k+1}^r (2m_i+1) \varphi_i \right) \oplus  
      \left( \bigoplus_{i=r+1}^\ell \varphi_i \right),
 \end{aligned}
\]
where
$
    \varphi_\sigma = \bigoplus_{i=k+1}^\ell \varphi_i
$
and
\[ 
    \begin{array}{c l}
     \varphi_i \ncong \varphi_j, \, \varphi_i \ncong \tilde{\varphi}_j, & \text{ for }
      i,j \in \{1, \dots, \ell\}, \, i \ne j, \\
     \varphi_i \ncong \tilde{\varphi}_i, & \text{ for }
      i \in \{1, \dots, h \}, \\
     \varphi_i \cong \tilde{\varphi}_i, & \text{ for }
      i \in \{h+1, \dots, \ell \},\\
     \varphi_i \text{ not of the same type as }  \hat{G}, &  \text{ for }
      i \in \{h+1, \dots, q \},\\
     \varphi_i \text{ of the same type as }  \hat{G}, &  \text{ for }
      i \in \{q+1, \dots, k \}.
    \end{array}
\]
Let $d = k-q$. Lemma~\ref{R_psi} implies $R_\varphi \cong \mathbb{Z}_2^d$.
In addition, $R_{\varphi, \pi} \cong R_\varphi$.

On the other hand, we know  that $R(\pi) \cong \mathbb{Z}_2^c$,
where $c$ is cardinality of the set
\[
    C = \{ i \in \{1, \dots, r \} \mid \delta_i \rtimes \sigma \text{ is reducible} \}.
\]
This follows from \cite{G1}, for $G=SO(2n+1,F),$  $Sp(2n,F),$ and from Lemma~\ref{Rorth} 
for $G = O(2n,F)$.
We want to show $C = \{q+1, \dots, k \}$. For any $i \in \{1, \dots, \ell \}$,
$\varphi_i$ is an irreducible representation of $W_F \times SL(2, \mathbb{C})$
and therefore it can be written in the form
$
     \varphi_i = \varphi_i' \otimes S_{a_i},
$
where $\varphi_i'$ is an irreducible representation of $W_F$ and $S_{a_i}$ is the 
standard irreducible $a_i$-dimensional algebraic representation of $SL(2, \mathbb{C})$.
For $i \in \{1, \dots, r \}$, this parameter corresponds to the representation $\delta( \rho_i, a_i)$.
Therefore, 
the representation $\delta_i$ in (\ref{pi}) is 
$
  \delta_i = \delta( \rho_i, a_i).
$
From (\ref{Lsigma}), we have
\[
    \varphi_\sigma = \bigoplus_{i=k+1}^\ell \varphi_i = 
          \bigoplus_{(\rho, a) \in \Jord(\sigma)} \varphi_\rho \otimes S_a.
\]
For $i \in \{h+1, \dots, q \}$, $\varphi_i$ is not of the same type as $\hat{G}$
and $ \delta(\rho_i,a_i) \rtimes \sigma$ is irreducible.
For $i \in \{q+1, \dots, k \}$, $\varphi_i$ is of the same type as $\hat{G}$.
Now, Lemma~\ref{J-1} tells us $(\rho_i,a_i) \in \Jord(\sigma)$ if and only if
$ \delta(\rho_i,a_i) \rtimes \sigma$ is irreducible.
Therefore, $ \delta(\rho_i,a_i) \rtimes \sigma$ is irreducible for $i \in \{k+1, \dots, r \}$
and $ \delta(\rho_i,a_i) \rtimes \sigma$ is reducible for $i \in \{q+1, \dots, k \}$.
It follows $C = \{q+1, \dots, k \}$ and $R(\pi) \cong \mathbb{Z}_2^d \cong R_{\varphi, \pi}$,
finishing the proof.
\end{proof}

\section{Unitary groups}\label{sect5}

Let $E/F$ be a quadratic extension of $p$-adic fields. Fix $\theta \in W_F \setminus W_E$.
Let 
$G=U(n) $ be a unitary group defined with respect to $E/F$, $U(n) \subset GL(n,E)$.
Let
\[
J_n =
\small
        \left( \begin{matrix}
                     & & & & 1 \\
                     & & & -1 &  \\
                     & & 1 & &  \\
                     & \cdot & & &  \\
                     \cdot & & & &  \\
                \end{matrix} \right)
\normalsize
\quad {\rm and} \quad
\tilde{J}_n =
\small
        \left( \begin{matrix}
                     & & & & 1 \\
                     & & & 1 &  \\
                     & & 1 & &  \\
                     & \cdot & & &  \\
                     \cdot & & & &  \\
                \end{matrix} \right)
\normalsize
\]
We have
\[
         ^L\! G = GL(n, \mathbb{C}) \rtimes W_F,
\]
where $W_E$ acts trivially on $GL(n, \mathbb{C})$ and 
the action of $w \in W_F \setminus W_E$  on $g \in GL(n, \mathbb{C})$ is
given by 
$
       w(g) = J_n \,^tg^{-1} J_n^{-1}.
$

\subsection{$L$-parameters for Levi subgroups}\label{sect5.1}

Suppose we have a Levi subgroup $M \cong Res_{E/F}GL_k \times U(\ell)$.
Then 
\[
       ^L\!M^0 = \{ \left(  \begin{matrix} g & &  \\
                                         & m & \\  & & h    \end{matrix} \right) \mid
                g, h \in GL(k, \mathbb{C}), m \in  GL(\ell, \mathbb{C})  \}.
\]
Direct computation shows that the action of $w \in W_F \setminus W_E$ on $^L\!M^0$  is given by
\[
             w( \left(  \begin{matrix} g & &  \\
                                         & m & \\  & & h    \end{matrix} \right) ) =
\left(  \begin{matrix} J_k \,^th^{-1} J_k^{-1} & &  \\
                     & J_\ell \,^tm^{-1} J_\ell^{-1} & \\  & & J_k \,^tg^{-1} J_k^{-1}    \end{matrix} \right).
\]

Let $\pi$ be a discrete series representation of  $GL(k,E) = (Res_{E/F}GL_k)(F)$
and $\tau$ a discrete series representation of $U(\ell)$. 
Let $\varphi_\pi : W_E \times SL(2, \mathbb{C}) \to GL(k, \mathbb{C})$ 
be the $L$-parameter of $\pi$ and 
$\varphi_\tau : W_F \times SL(2, \mathbb{C}) \to GL(\ell, \mathbb{C}) \rtimes W_F$ the $L$-parameter of $\tau$.
Write
\[
        \varphi_\tau (w,x) = (\varphi'_\tau (w,x), w), \quad w \in W_F, x \in SL(2, \mathbb{C}).
\]

According to \cite{Bor}, sections 4, 5 and 8, there exists a unique (up to equivalence) $L$-parameter
$
  \varphi: W_F \times SL(2, \mathbb{C}) \to \, ^L\!M
$
such that
\begin{equation}\label{w}
      \begin{aligned}
         \varphi((w,x)) &= (\varphi_\pi(w), *, *,w),  \quad \forall w \in W_E,  x \in SL(2, \mathbb{C}),\\
          \varphi((w,x)) &= (*, \varphi'_\tau (w,x),  *,w),  \quad \forall w \in W_F, x \in SL(2, \mathbb{C}).
     \end{aligned}
\end{equation}
We will define a map 
$
  \varphi: W_F\times SL(2, \mathbb{C}) \to \, ^L\!M
$
satisfying (\ref{w}) and show that $\varphi$ is a homomorphism. 
Define
\begin{equation}\label{DefWE}
   \varphi((w,x)) = (\varphi_\pi(w,x), \varphi'_\tau (w,x), ^t\!\varphi_\pi(\theta w\theta^{-1},x)^{-1},w), 
\quad w \in W_E,  x \in SL(2, \mathbb{C})
\end{equation}
and
\[
     \varphi((\theta,1)) = (J_k^{-1}, \varphi'_\tau (\theta,1), ^t\!\varphi_\pi(\theta^2,1)^{-1}J_k, \theta).
\]
Note that
\[
      \begin{aligned}
           \varphi_\tau (\theta^2, 1) &= ( \varphi'_\tau (\theta,1), \theta) ( \varphi'_\tau (\theta,1), \theta)\\
               &= ( \varphi'_\tau (\theta,1), 1) (J_\ell \,^t\varphi'_\tau (\theta,1)^{-1} J_\ell^{-1}, \theta^2)\\
               &= (\varphi'_\tau (\theta,1)J_\ell \,^t\varphi'_\tau (\theta,1)^{-1} J_\ell^{-1}, \theta^2).
     \end{aligned}
\]
It follows
\begin{equation}\label{tau}
     \varphi'_\tau (\theta,1)J_\ell \,^t\varphi'_\tau (\theta,1)^{-1} J_\ell^{-1} = \varphi_\tau' (\theta^2, 1).
\end{equation}
Similarly, for $w \in W_E$, $x \in SL(2, \mathbb{C})$,
\[
      \begin{aligned}
           \varphi_\tau(\theta w \theta^{-1}, x) 
&=
       \varphi_\tau(\theta, 1) \varphi_\tau(w, x) \varphi_\tau(\theta,1)^{-1}   \\
&=
       (\varphi_\tau'(\theta, 1), \theta)  (\varphi_\tau'(w, x),w) (1, \theta^{-1})
        (\varphi_\tau'(\theta,1)^{-1} ,1)  \\
&=
       (\varphi_\tau'(\theta, 1), 1) (J_\ell \,^t\varphi'_\tau (w,x)^{-1} J_\ell^{-1},\theta w \theta^{-1})
        (\varphi_\tau'(\theta,1)^{-1} ,1)  \\
&=
     (\varphi_\tau'(\theta, 1)J_\ell \,^t\varphi'_\tau (w,x)^{-1} J_\ell^{-1}\varphi_\tau'(\theta,1)^{-1},
        \theta w \theta^{-1})
    \end{aligned}
\]
and thus
\begin{equation}\label{tau2}
     \varphi_\tau'(\theta, 1)J_\ell \,^t\varphi'_\tau (w,x)^{-1} J_\ell^{-1}\varphi_\tau'(\theta,1)^{-1}
=
    \varphi_\tau'(\theta w \theta^{-1},x).
\end{equation}
Now,
\[
      \begin{aligned}
           \varphi (\theta,1) \varphi (\theta,1)
&=
       (J_k^{-1}, \varphi'_\tau (\theta,1), ^t\!\varphi_\pi(\theta^2, 1)^{-1}J_k, \theta)
      (J_k^{-1}, \varphi'_\tau (\theta,1), ^t\!\varphi_\pi(\theta^2, 1)^{-1}J_k, \theta)    \\
&=
       (J_k^{-1}, \varphi'_\tau (\theta,1), ^t\!\varphi_\pi(\theta^2, 1)^{-1}J_k, 1)
   (J_k\varphi_\pi(\theta^2,1), J_\ell \, ^t\!\varphi'_\tau (\theta,1)^{-1}J_\ell^{-1}, J_k^{-1}, \theta^2) \\
&= 
      (\varphi_\pi(\theta^2,1), \varphi_\tau' (\theta^2, 1), ^t\!\varphi_\pi(\theta^2,1)^{-1}, \theta^2) \\
&= 
       \varphi (\theta^2, 1),
    \end{aligned}
\]
using (\ref{tau}) and (\ref{DefWE}). Further, for $w \in W_E$, $x \in SL(2, \mathbb{C})$, we have
\[
      \begin{aligned}
          \varphi(\theta,1) &\varphi(w,x) \varphi(\theta,1)^{-1}   \\ 
&= 
       (J_k^{-1}, \varphi'_\tau (\theta,1), ^t\!\varphi_\pi(\theta^2,1)^{-1}J_k, \theta)
      (\varphi_\pi(w,x), \varphi'_\tau (w,x), ^t\!\varphi_\pi(\theta w\theta^{-1},x)^{-1},w)  \\
&\hspace{2cm}
      \cdot (1,1,1,\theta^{-1})
      (J_k, \varphi'_\tau (\theta,1)^{-1}, J_k^{-1}\, ^t\!\varphi_\pi(\theta^2,1), 1)
\\ 
&= 
       (J_k^{-1}, \varphi'_\tau (\theta,1), ^t\!\varphi_\pi(\theta^2,1)^{-1}J_k, 1)
 \\
&\hspace{2cm}
  \cdot (J_k \varphi_\pi(\theta w\theta^{-1},x)J_k^{-1}, J_\ell \,^t\varphi'_\tau (\theta,x)^{-1} J_\ell^{-1}, 
     J_k \, ^t\!\varphi_\pi(w,x)^{-1} J_k^{-1}, \theta w \theta^{-1})
  \\
&\hspace{4cm}
      \cdot (J_k, \varphi'_\tau (\theta,1)^{-1}, J_k^{-1}\,^t\!\varphi_\pi(\theta^2,1), 1)
\\ 
&= 
      ( \varphi_\pi(\theta w\theta^{-1},x), \varphi_\tau' (\theta w\theta^{-1},x),
^t\!\varphi_\pi(\theta^2 w\theta^{-2},x)^{-1},\theta w\theta^{-1}) 
\\ 
&= 
       \varphi(\theta w\theta^{-1},x).
         \end{aligned}
\]
Here, we use (\ref{tau2}) and $J_k^2 = (J_k^{-1})^2 = (-1)^{k-1}$, so
\[
       ^t\!\varphi_\pi(\theta^2,1)^{-1}J_k J_k \, ^t\!\varphi_\pi(w,x)^{-1} J_k^{-1} 
   J_k^{-1}\,^t\!\varphi_\pi(\theta^2,1) = \, ^t\!\varphi_\pi(\theta^2 w\theta^{-2},x)^{-1}.
\]
In conclusion, we have $\varphi(\theta^2, 1) = \varphi(\theta, 1)^2$ and
$
    \varphi(\theta w\theta^{-1},x) = \varphi(\theta,1) \varphi(w,x) \varphi(\theta,1)^{-1}.
$
Since $\varphi$ is clearly multiplicative on $W_E\times SL(2, \mathbb{C})$, 
it follows that $\varphi$ is a homomorphism.
Therefore, $\varphi$ is the $L$-parameter for $\pi \otimes \tau$.

\subsection{Coefficients $\lambda_\varphi$}
Let $\varphi:W_E \times SL(2,\mathbb{C}) \to GL_k(\mathbb{C})$ be an irreducible $L$-parameter.
Assume
$\varphi \cong \,^t(^\theta \varphi)^{-1}$.
Let $X$ be a nonzero matrix such that
\[
      ^t\varphi(\theta w\theta^{-1}, x)^{-1} = X^{-1} \varphi(w,x) X,
\]
for all $w \in W_E$, $x \in SL(2,\mathbb{C})$.
We proceed similarly as in \cite{Moe}, p.190. By taking transpose and inverse, 
\[
      \varphi(\theta w\theta^{-1}, x) = \, ^tX \, ^t \varphi (w, x)^{-1} \, ^tX^{-1}.
\]
Next, we replace $w$ by $\theta w\theta^{-1}$. This gives
\[
     \varphi(\theta^2,1)  \varphi(w, x) \varphi(\theta^{-2},1) 
                = \, ^tX \, ^t \varphi (\theta w\theta^{-1},x)^{-1} \, ^tX^{-1}
= \, ^tX X^{-1} \varphi(w,x) X \, ^tX^{-1},     
\]
for all $w \in W_E$, $x \in SL(2,\mathbb{C})$.
Since $\varphi$ is irreducible, $ \varphi(\theta^{-2},1)\,^tX X^{-1}$ is a constant. 
Define
\begin{equation}\label{LdbPhi}
       \lambda_\varphi = \varphi(\theta^{-2},1)\,^tX X^{-1}.
\end{equation}
As in \cite{Moe}, we can show that $\lambda_\varphi = \pm 1$.

\begin{lemma}

Let $\varphi:W_E  \to GL_k(\mathbb{C})$ be an irreducible $L$-parameter
such that $\varphi \cong \,^t(^\theta \varphi)^{-1}$.
Let $S_a$ be the standard $a$-dimensional irreducible algebraic representation of 
$SL(2, \mathbb{C})$. 
Then 
$
     ^\theta (\, ^t(\varphi\otimes S_a)^{-1} ) \cong \varphi\otimes S_a
$
and
\[
        \lambda_{\varphi \otimes S_a}  = (-1)^{a+1} \lambda_\varphi.
\]

\end{lemma}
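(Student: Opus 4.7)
The plan is to reduce the assertion to a short Schur-lemma computation using the known self-duality properties of $S_a$. For the isomorphism, I would observe that $S_a$ is self-dual as a representation of $SL(2,\mathbb{C})$ and that the $\theta$-twist acts only on the $W_E$-factor; combined with the hypothesis $\varphi \cong {}^t({}^\theta\varphi)^{-1}$ this immediately yields ${}^\theta\bigl({}^t(\varphi \otimes S_a)^{-1}\bigr) \cong \varphi \otimes S_a$.

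For the scalar identity, the strategy is to build an intertwiner for $\varphi \otimes S_a$ as a tensor product. Fix $X$ satisfying ${}^t\varphi(\theta w \theta^{-1})^{-1} = X^{-1}\varphi(w) X$ (the matrix entering (\ref{LdbPhi})), and pick $Z$ with ${}^tS_a(x)^{-1} = Z^{-1} S_a(x) Z$ for all $x \in SL(2,\mathbb{C})$; then $Y := X \otimes Z$ plays the analogous role for $\varphi \otimes S_a$. Since $(\varphi \otimes S_a)(\theta^{-2},1) = \varphi(\theta^{-2}) \otimes I$, substituting into (\ref{LdbPhi}) gives
\[
\lambda_{\varphi \otimes S_a} \;=\; \bigl(\varphi(\theta^{-2})\, {}^tX\, X^{-1}\bigr) \otimes \bigl({}^tZ\, Z^{-1}\bigr) \;=\; \lambda_\varphi \cdot \bigl({}^tZ\, Z^{-1}\bigr),
\]
both factors being scalars by Schur's lemma applied to $\varphi$ and to $S_a$.

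It then remains to evaluate ${}^tZ\, Z^{-1}$. By (\ref{Sa}), $S_a$ preserves a nondegenerate bilinear form $B$ which is symmetric when $a$ is odd and alternating when $a$ is even; taking $Z = B^{-1}$, one computes ${}^tZ = \epsilon Z$ with $\epsilon = +1$ in the symmetric case and $\epsilon = -1$ in the alternating case. Hence ${}^tZ\, Z^{-1} = \epsilon = (-1)^{a+1}$, yielding the claimed relation $\lambda_{\varphi \otimes S_a} = (-1)^{a+1}\lambda_\varphi$.

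The only subtleties are the well-definedness of the scalars independent of the choices of $X$ and $Z$ (scalar rescalings cancel in ${}^tX\, X^{-1}$ and ${}^tZ\, Z^{-1}$) and the direct verification that $X \otimes Z$ intertwines correctly; neither is a real obstacle, so the argument is essentially the computation above.
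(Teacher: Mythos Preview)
Your argument is correct and is essentially the same as the paper's: both build the intertwiner for $\varphi\otimes S_a$ as the tensor product of the intertwiner $X$ for $\varphi$ with an intertwiner for $S_a$, then read off $\lambda_{\varphi\otimes S_a}$ from \eqref{LdbPhi} and the symmetry/antisymmetry of the $S_a$-intertwiner. The only cosmetic difference is that you justify the sign ${}^tZ\,Z^{-1}=(-1)^{a+1}$ by explicitly tying $Z$ to the invariant bilinear form $B$, whereas the paper simply asserts ${}^tY=\pm Y$ according to the parity of $a$.
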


\begin{proof} We know that $^tS_a^{-1} \cong  S_a$. Let $Y$ be a nonzero matrix such that
\[
        ^tS_a(x)^{-1} = Y^{-1} S_a(x) Y,
\]
for all $x \in SL(2,\mathbb{C})$.
Then $^tY = Y$ for $a$ odd and $^tY = -Y$ for $a$ even.
Let $X$ be a nonzero matrix such that
\[
      ^t\varphi(\theta w\theta^{-1})^{-1} = X^{-1} \varphi(w) X,
\]
for all $w \in W_E$. We have
\[
\begin{aligned}
       ^t(\varphi\otimes S_a(\theta w\theta^{-1}, x))^{-1}
&=
     (^t\varphi(\theta w\theta^{-1})^{-1} )\otimes (^tS_a(x)^{-1} ) \\
&= 
       ( X^{-1} \varphi(w) X )  \otimes ( Y^{-1} S_a(x) Y)  \\
&= (X \otimes Y)^{-1} (\varphi \otimes S_a(w,x))  \otimes (X \otimes Y).
\end{aligned}
\]
It follows that
$^\theta (\, ^t(\varphi\otimes S_a)^{-1} ) \cong \varphi\otimes S_a$
and
\[
   \begin{aligned}
        \lambda_{\varphi \otimes S_a}  
&= 
                    ( \varphi \otimes S_a(\theta^{-2},1)) \,^t(X \otimes Y) (X \otimes Y)^{-1}\\
&=
           (\varphi(\theta^{-2})\,^tX X^{-1}) \otimes (\,^tY Y^{-1}))
=
(-1)^{a+1} \lambda_\varphi.
\end{aligned}
\]
\end{proof}

\subsection{Centralizers}

Let $\varphi : W_F \times SL(2,\mathbb{C}) \to \,^L G$ be an $L$-parameter. 
Denote by $\varphi_E$ the restriction 
of $\varphi$ to $W_E\times SL(2,\mathbb{C})$. Then $\varphi_E$ is a representation of 
$W_E\times SL(2,\mathbb{C})$ on $V=\mathbb{C}^n$.
Write $\varphi_E$ as a sum of irreducible subrepresentations
\[
     \varphi_E = m_1 \varphi_1 \oplus \cdots \oplus m_\ell \varphi_\ell,
\]
where $m_i$ is the multiplicity of $\varphi_i$ and 
$\varphi_i \ncong \varphi_j$, for $i \ne j$. It follows from \cite{Moe} that $S_\varphi$,
the centralizer in $\hat{G}$ of the image of $\varphi$, is given by
\begin{equation}\label{Sphi}
   S_\varphi  \cong  \prod_{i=1}^\ell C(m_i \varphi_i),
\end{equation}
where
\[
       C(m_i \varphi_i) = \begin{cases}
                 GL(m_i,\mathbb{C}), &{\rm if} \, \varphi_i \ncong \,^\theta \widetilde{\varphi_i }, 
\\
                 O(m_i,\mathbb{C}), &{\rm if} \, \varphi_i  \cong \,^\theta \widetilde{\varphi_i }, \, 
                                     \lambda_{\varphi_i} = (-1)^{n-1},
\\
                 Sp(m_i,\mathbb{C}), &{\rm if} \, \varphi_i  \cong \,^\theta \widetilde{\varphi_i }, \, 
                                     \lambda_{\varphi_i} = (-1)^{n}.
     \end{cases}
\]

\subsection{Coefficients $\lambda_\rho$}

Let $^LM = GL_k(\mathbb{C}) \times GL_k(\mathbb{C}) \rtimes W_F$,
where the action of $w \in W_F \setminus W_E$ on $GL_k(\mathbb{C}) \times GL_k(\mathbb{C})$
is given by
$
   w (g, h, 1) w^{-1} = (J_n \,^th^{-1} J_n^{-1}, J_n \,^tg^{-1} J_n^{-1}, 1).
$
For $\eta = \pm1$, we denote by $R_\eta$ the representation of $^LM$ on 
${\rm End}_\mathbb{C}(\mathbb{C}^k)$ given by 
\[
 \begin{aligned}
    R_\eta((g,h,1)) \cdot X &= g Xh^{-1},\\
    R_\eta((1,1,\theta)) \cdot X &= \eta \tilde{J}_k \,^t X \tilde{J}_k.
 \end{aligned}
\]
Let $\tau$ denote the nontrivial element in $Gal(E/F)$.
Let $\rho$ be a supercuspidal discrete series representation of $GL(k, E)$.
Assume $\rho \cong \,^\tau\widetilde{\rho}$.
Then precisely one of the two $L$-functions $L(s, \rho, R_1)$ and $L(s, \rho, R_{-1})$
has a pole at $s=0$. Denote by $\lambda_\rho$ the value of $\eta$ such that $L(s, \rho, R_\eta)$
has a pole at $s=0$.

\begin{lemma}

Let $\rho$ be a supercuspidal discrete series representation of $GL(k, E)$ such that 
$\rho \cong \,^\tau\widetilde{\rho}$. 
Assume $k$ is odd.
Let $\varphi_\rho$ be the $L$-parameter of $\rho$.
Then 
$
  \lambda_{\varphi_\rho} = \lambda_\rho.
$

\end{lemma}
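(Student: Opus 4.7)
The plan is to characterise both $\lambda_\rho$ and $\lambda_{\varphi_\rho}$ via the same object: the space of $W_F$-invariants in the composition $R_\eta \circ \varphi$, where $\varphi : W_F \to {}^L\!M$ is the $L$-parameter constructed in Section~\ref{sect5.1} of $\rho$ viewed as a representation of the Levi $M = \mathrm{Res}_{E/F}\, GL_k$ (with no unitary factor). First, by an Asai analogue of Henniart's equality of Langlands--Shahidi and Artin $L$-functions, one has $L(s, \rho, R_\eta) = L(s, R_\eta \circ \varphi)$; since $\varphi$ is tempered, a pole at $s=0$ occurs exactly when $R_\eta \circ \varphi$ contains the trivial $W_F$-representation. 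Hence $\lambda_\rho$ is characterised as the unique $\eta \in \{\pm 1\}$ for which $(R_\eta \circ \varphi)^{W_F} \ne 0$.

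Next, I unwind the fixed-vector condition. Using the explicit formulas for $\varphi$ from Section~\ref{sect5.1}, a matrix $X \in \mathrm{End}_{\mathbb{C}}(\mathbb{C}^k)$ lies in $(R_\eta \circ \varphi)^{W_F}$ precisely when
\begin{equation*}
\varphi_\rho(w)\, X\, {}^t\varphi_\rho(\theta w \theta^{-1}) = X \quad \text{for all } w \in W_E,
\end{equation*}
and, additionally, $\eta\, \tilde{J}_k\, {}^tX\, \tilde{J}_k = X$. The $W_E$-condition is, after rearrangement, exactly the intertwining identity ${}^t\varphi_\rho(\theta w \theta^{-1})^{-1} = X^{-1} \varphi_\rho(w) X$ appearing in (\ref{LdbPhi}); since $\rho$ is supercuspidal, $\varphi_\rho$ is irreducible on $W_E$, so Schur's lemma pins $X$ down uniquely up to scalar. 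The $\theta$-condition then holds for at most one sign $\eta \in \{\pm 1\}$.

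The heart of the argument is to identify this forced $\eta$ with $\lambda_{\varphi_\rho}$. Substituting ${}^tX = \eta\, \tilde{J}_k^{-1} X\, \tilde{J}_k^{-1}$ into the definition $\lambda_{\varphi_\rho} = \varphi_\rho(\theta^{-2})\, {}^tX\, X^{-1}$ yields
\begin{equation*}
\lambda_{\varphi_\rho} = \eta \cdot \varphi_\rho(\theta^{-2})\, \tilde{J}_k^{-1} X\, \tilde{J}_k^{-1} X^{-1}.
\end{equation*}
Using the oddness of $k$ to deliver $J_k^2 = \tilde{J}_k^2 = I$ (so $J_k$ and $\tilde{J}_k$ agree up to conjugation by a diagonal sign matrix), together with the explicit formula for $\varphi(\theta,1)$ from Section~\ref{sect5.1} and the $W_E$-relation at $w = \theta^2$ (which ties $\varphi_\rho(\theta^{-2})$ to conjugation by $X$), the remaining factor collapses to $+1$, giving $\eta = \lambda_{\varphi_\rho}$.

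The main obstacle I expect is precisely this last step: $J_k$ and $\tilde{J}_k$ differ by a diagonal matrix of alternating signs, and its interaction with the transpose operation and with the specific intertwiner $X$ demands careful bookkeeping. The hypothesis $k$ odd is crucial for eliminating $\pm 1$ ambiguities via $J_k^2 = \tilde{J}_k^2 = I$, in the same spirit as the parity-dependence $(-1)^{a+1}$ in the preceding lemma on $\lambda_{\varphi \otimes S_a}$.
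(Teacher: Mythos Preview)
Your overall strategy---pass to the Artin side via Henniart's identity $L(s,\rho,R_\eta)=L(s,R_\eta\circ\varphi)$, locate a $W_F$-fixed vector $X$, recognise the $W_E$-part as the intertwining relation defining $\lambda_{\varphi_\rho}$, and then extract the sign from the $\theta$-part---is exactly the paper's. The gap is in the last two steps.

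First, your $\theta$-condition is wrong. You write that $X$ is fixed by $\theta$ exactly when $\eta\,\tilde J_k\,{}^tX\,\tilde J_k=X$, but this is the action of $(1,1,\theta)\in{}^L\!M$ under $R_\eta$, not of $\varphi(\theta)$. Since $\varphi(\theta)=(J_k^{-1},\;{}^t\varphi_\rho(\theta^2)^{-1}J_k,\;\theta)$ has nontrivial $\hat M$-part, the actual condition $(R_\eta\circ\varphi)(\theta)\cdot X=X$ reads
\[
\eta\,J_k^{-1}\tilde J_k\,{}^tX\,\tilde J_k J_k^{-1}\,{}^t\varphi_\rho(\theta^2)=X.
\]
The factors $J_k^{-1}$ and ${}^t\varphi_\rho(\theta^2)$ you omit are precisely what make the ``collapse to $+1$'' step nontrivial; once they are in place, your proposed substitution produces an expression involving the diagonal sign matrix $D=\tilde J_k J_k$ conjugated through $X$ and $\varphi_\rho(\theta^2)$, and there is no evident reason this should equal the identity.

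Second, the paper does not attempt the direct manipulation you sketch. Instead it takes \emph{determinants} of both the (correct) $\theta$-equation and of the defining relation $\varphi_\rho(\theta^{-2})\,{}^tX\,X^{-1}=\lambda_{\varphi_\rho}I_k$. All the $J_k$, $\tilde J_k$ factors have determinant $\pm1$ and cancel in pairs, yielding $\det\varphi_\rho(\theta^2)=\lambda_\rho^{\,k}$ from the first and $\det\varphi_\rho(\theta^2)^{-1}=\lambda_{\varphi_\rho}^{\,k}$ from the second. Hence $\lambda_\rho^{\,k}=\lambda_{\varphi_\rho}^{\,k}$, and $k$ odd gives $\lambda_\rho=\lambda_{\varphi_\rho}$. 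This determinant trick is the missing idea: it bypasses entirely the sign bookkeeping you flag as the main obstacle.
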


\begin{proof}

As shown in Section~\ref{sect5.1}, the parameter $\varphi : W_F \to \, ^LM$ corresponding to 
$\varphi_\rho : W_E \to GL_k(\mathbb{C})$ is given by
\begin{equation}\label{WE}
   \varphi(w) = \left( \left(    \begin{matrix}
\varphi_\rho(w) & \\
               &  ^t\varphi_\rho(\theta w\theta^{-1})^{-1}
        \end{matrix}      \right), w \right),
\end{equation}
for $w \in W_E$, and
\begin{equation}\label{ThE}
     \varphi(\theta) = \left( \left(    \begin{matrix}
          J_k^{-1}  &  \\
                   & ^t \varphi_\rho(\theta^2)^{-1}J_k
        \end{matrix}      \right),  \theta \right).
\end{equation}
From \cite{Hen2}, we have $L(s, \rho, R_\eta) = L(s, R_\eta \circ \varphi)$.
Therefore, $L(s, R_{\lambda_\rho} \circ \varphi)$ has a pole at $s=0$. 
Then $R_{\lambda_\rho} \circ \varphi$ contains
the trivial representation, so there exists nonzero $X \in M_k(\mathbb{C})$ such that
$(R_{\lambda_\rho} \circ \varphi)(w) \cdot X = X$, for all $w \in W_F$.
In particular, (\ref{WE}) implies that for $w \in W_E$, 
\[
        \varphi_\rho(w) X \, ^t\varphi_\rho(\theta w\theta^{-1}) = X
\]
so
\begin{equation}\label{X}
         \varphi_\rho(w) X = X \, ^t\varphi_\rho(\theta w\theta^{-1})^{-1}.
\end{equation}
Therefore, $X$ is a nonzero intertwining operator between  $\varphi_\rho$ and 
$^t(^\theta \varphi_\rho)^{-1}$. From (\ref{LdbPhi}), we have
\begin{equation}\label{Xlambda}
    \varphi_\rho(\theta^{-2}) \,^tX X^{-1} = \lambda_{\varphi_\rho}.
\end{equation}
Now, since $(R_{\lambda_\rho} \circ \varphi)(\theta) \cdot X = X$, we have from (\ref{ThE})
\[
        J_k^{-1} \tilde{J}_k \,^t X \tilde{J}_k
                  J_k^{-1} \, ^t \varphi_\rho(\theta^2) = \lambda_\rho X.
\]
By taking determinant, we get $\det (\varphi_\rho(\theta^2)) =\lambda_\rho^k$. The  equation (\ref{Xlambda})
implies (again by taking determinant)
\[
       \lambda_{\varphi_\rho}^k = \lambda_\rho^k.
\]
Since  $k$ is odd, we have $\lambda_{\varphi_\rho} = \lambda_\rho$. 
\end{proof}

\subsection{Jordan blocks for unitary groups}

For the unitary group $U(n)$, define
\[
         R_d = R_\eta, \quad {\rm where} \quad \eta = (-1)^n.
\]
Let $\sigma$ be an irreducible discrete series representation of $U(n)$. 
Denote by $Jord(\sigma)$
the set of pairs $(\rho,a)$, where $\rho \in \, ^0\mathcal{E}(GL(d_\rho,E))$, 
$\rho \cong \,^\tau\tilde{\rho}$,
 and $a \in \mathbb{Z}^+$,
such that $(\rho,a)$ satisfies properties (J-1) and (J-2) from  Section~\ref{sect1.2}.

\begin{lemma} Let $\rho$ be an irreducible supercuspidal representation of $GL(d,E)$
such that
$
\varphi_\rho \cong \,^\theta \widetilde{\varphi}_\rho,
$
where $\varphi_\rho$ is the $L$-parameter for $\rho$. If $d$ is even, assume 
$\lambda_{\varphi_\rho} = \lambda_\rho$.
Then the condition (J-1) is equivalent to

\begin{enumerate}

\item[{(J-1$''$)}] $ \lambda_{{\varphi_\rho}  \otimes S_a}  = (-1)^{n+1} .$

\end{enumerate}
\end{lemma}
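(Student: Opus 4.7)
The plan is to translate both (J-1) and (J-1$''$) into statements about $\lambda_{\varphi_\rho}$, using the preceding lemma on $\lambda_{\varphi_\rho \otimes S_a}$, and then check equivalence by a short parity argument.

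First I would unwind (J-1). By definition of $R_d$ for $U(n)$, we have $R_{d_\rho} = R_\eta$ with $\eta = (-1)^n$. By the defining property of $\lambda_\rho$, the $L$-function $L(s,\rho,R_\eta)$ has a pole at $s=0$ exactly when $\eta = \lambda_\rho$. Hence $L(s,\rho,R_{d_\rho})$ has a pole at $s=0$ iff $\lambda_\rho = (-1)^n$. Now invoke the identification $\lambda_{\varphi_\rho} = \lambda_\rho$: for $d$ odd this is the content of the previous lemma in the paper, while for $d$ even it is precisely the hypothesis of the statement. Therefore (J-1) reads: \emph{$a$ is even if and only if $\lambda_{\varphi_\rho} = (-1)^n$.}

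Next I would unwind (J-1$''$). The previous lemma gives
\[
\lambda_{\varphi_\rho \otimes S_a} = (-1)^{a+1}\lambda_{\varphi_\rho},
\]
so the condition $\lambda_{\varphi_\rho \otimes S_a} = (-1)^{n+1}$ is equivalent to $\lambda_{\varphi_\rho} = (-1)^{n+a}$. The equivalence with (J-1) is then immediate: when $a$ is even, both conditions say $\lambda_{\varphi_\rho} = (-1)^n$; when $a$ is odd, both say $\lambda_{\varphi_\rho} = (-1)^{n+1}$. This finishes the argument.

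The main conceptual step, rather than an obstacle, is the clean use of $\lambda_{\varphi_\rho} = \lambda_\rho$, which is why the statement splits the parity of $d$: for odd $d$ the equality is a theorem (proved just above using Henniart's equality $L(s,\rho,R_\eta) = L(s,R_\eta\circ\varphi)$ and a determinant computation on the matrix $X$ realizing the self-duality of $\varphi_\rho$), whereas for even $d$ the determinant argument only yields $\lambda_{\varphi_\rho}^k = \lambda_\rho^k$ with $k$ even, which does not pin down the sign, forcing this case into an assumption. Once $\lambda_{\varphi_\rho} = \lambda_\rho$ is granted, the rest is just bookkeeping with signs and the formula $\lambda_{\varphi_\rho\otimes S_a} = (-1)^{a+1}\lambda_{\varphi_\rho}$.
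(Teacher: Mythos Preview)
Your proof is correct and follows essentially the same approach as the paper: translate the pole condition into $\lambda_{\varphi_\rho} = (-1)^n$ via $\lambda_\rho = \lambda_{\varphi_\rho}$, apply the formula $\lambda_{\varphi_\rho\otimes S_a} = (-1)^{a+1}\lambda_{\varphi_\rho}$, and check the equivalence by the parity of $a$. Your write-up is slightly more explicit than the paper's about why the identification $\lambda_{\varphi_\rho}=\lambda_\rho$ is available (theorem for $d$ odd, hypothesis for $d$ even), but the argument is the same.
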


\begin{proof}
The condition (J-1) says that 
$a$ is even if $L(s, \rho, R_d)$ has a pole at $s=0$ and odd otherwise.
Observe that 
\[
     \begin{aligned}
           L(s, \rho, R_d)  \, {\rm has \, a \, pole \, at \,} s=0
&\Leftrightarrow
           \lambda_{\varphi_\rho} = (-1)^n \\
&\Leftrightarrow
           \lambda_{{\varphi_\rho}  \otimes S_a}  = (-1)^n(-1)^{a+1}\\
&\Leftrightarrow
           \lambda_{{\varphi_\rho}  \otimes S_a}  = \begin{cases}
                    (-1)^{n+1}, &a \, {\rm even},\\ 
                     (-1)^n , &a \, {\rm odd} .
          \end{cases}\hspace{2.5cm}
     \end{aligned}
\]
From this, it is clear that (J-1) is equivalent to (J-1$''$).
\end{proof}

\subsection{$R$-groups for unitary groups}

\begin{lemma}

Let $\sigma$ be an irreducible discrete series representation of $U(n)$ and
let $\delta= \delta(\rho, a)$ be an irreducible discrete series representation of $GL(\ell,E)$,
$\ell= da$, $d = \dim(\rho)$. 
Let $\varphi_\rho$ and $\varphi$ be the $L$-parameters of $\rho$  and
$\pi = \delta \otimes \sigma$, respectively. If $d$ is even and  
$
\varphi_\rho \cong \,^\theta \widetilde{\varphi}_\rho,
$
assume $\lambda_{\varphi_\rho} = \lambda_\rho$.
Then 
$
     R_{\varphi, \pi} \cong R(\pi).
$

\begin{proof}

Let $\varphi_\sigma$ be the $L$-parameter of $\sigma$. 
Then
\[
    \varphi_E \cong \varphi_\rho \otimes S_a \oplus \,^\theta \widetilde{\varphi}_\rho \otimes S_a
    \oplus (\varphi_\sigma)_E.
\]
This is a representation of $W_E \times SL(2,\mathbb{C})$ on $V =\mathbb{C}^{n+2\ell}$.
Write $(\varphi_\sigma)_E$ as a sum of irreducible components,
\[
     (\varphi_\sigma)_E =  \varphi_1 \oplus \cdots \oplus  \varphi_m.
\]
Each component appears with multiplicity one.
The centralizer $S_\varphi$ is given by (\ref{Sphi}). If 
$
\varphi_\rho \ncong \,^\theta \widetilde{\varphi}_\rho,
$
then
\[
       S_\varphi \cong GL(1, \mathbb{C}) \times GL(1, \mathbb{C}) \times \prod_{i=1}^m GL(1, \mathbb{C}).
\]
This implies $R_{\varphi} = 1$. On the other hand, $\delta \rtimes \sigma$ is irreducible, so 
$R(\pi) = 1$. It follows 
$
     R_{\varphi, \pi} \cong R(\pi).
$

Now, consider the case 
$
\varphi_\rho \cong \,^\theta \widetilde{\varphi}_\rho.
$
If 
$
  \varphi_\rho \otimes S_a  \in \{  \varphi_1, \dots , \varphi_m\},
$
then 
\[
       S_\varphi \cong O(3, \mathbb{C})  \times \prod_{i=1}^{m-1} GL(1, \mathbb{C})
\quad  {\rm and} \quad
           S_\varphi^0 \cong SO(3, \mathbb{C})  \times \prod_{i=1}^{m-1} GL(1, \mathbb{C}).
\]
This gives $W_\varphi = W_\varphi^0$ and $R_{\varphi} = 1$. Since
$
  \varphi_\rho \otimes S_a  \in \{  \varphi_1, \dots , \varphi_m\},
$
the condition (J-2) implies that $\delta \rtimes \sigma$ is irreducible. Therefore,
$R(\pi) = 1 = R_{\varphi, \pi}$.

It remains to consider the case 
$
\varphi_\rho \cong \,^\theta \widetilde{\varphi}_\rho
$
and
$
  \varphi_\rho \otimes S_a  \notin \{  \varphi_1, \dots , \varphi_m\}.
$
Then $(\rho,a)$ does not satisfy (J-1$''$) or (J-2).
Assume first that $(\rho,a)$ does not satisfy (J-1$''$). Then  
$\delta \rtimes \sigma$ is irreducible, so $R(\pi) = 1$.
Since $(\rho,a)$ does not satisfy (J-1$''$), we have 
$
   \lambda_{\varphi_\rho \otimes S_a} = (-1)^n = (-1)^{n + 2 \ell}.
$
Then, by (\ref{Sphi}), 
\[
       S_\varphi \cong Sp(2, \mathbb{C})  \times \prod_{i=1}^m GL(1, \mathbb{C}).
\]
It follows $R_{\varphi, \pi} = 1 = R(\pi)$.

Now, assume that $(\rho,a)$ satisfies (J-1$''$), but does not satisfy (J-2). Then
$
   \lambda_{\varphi_\rho \otimes S_a} = (-1)^{n-1} = (-1)^{n +2\ell -1}, 
$
so
\[
       S_\varphi \cong O(2, \mathbb{C})  \times \prod_{i=1}^m GL(1, \mathbb{C})
\]
and $R_{\varphi, \pi} \cong \mathbb{Z}_2$.
Since  $(\rho,a)$ does not satisfy (J-2), $\delta \rtimes \sigma$ is reducible and hence
$R(\pi) \cong \mathbb{Z}_2 \cong R_{\varphi, \pi}$.
\end{proof}

\end{lemma}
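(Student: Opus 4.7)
My plan is to reduce the equality $R_{\varphi,\pi} \cong R(\pi)$ to a case analysis driven by the centralizer formula (\ref{Sphi}). Restricting $\varphi$ to $W_E \times SL(2,\mathbb{C})$ gives
\[
     \varphi_E \cong (\varphi_\rho \otimes S_a) \oplus ({}^\theta\widetilde{\varphi_\rho \otimes S_a}) \oplus (\varphi_\sigma)_E,
\]
and since $\sigma$ is discrete series, each irreducible constituent of $(\varphi_\sigma)_E$ appears with multiplicity one and so contributes to $S_\varphi$ a factor with trivial component group. Consequently $R_\varphi$ is determined entirely by the two summands coming from $\delta$, and I would analyze these summands by splitting on the shape of $(\varphi_\rho, a)$.

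If $\varphi_\rho \not\cong {}^\theta\widetilde{\varphi}_\rho$, the two $\delta$-summands pair into one $GL(1,\mathbb{C})$ factor of $S_\varphi$, so $S_\varphi$ is connected and $R_\varphi = 1$; on the Knapp--Stein side, the non-self-duality of $\rho$ forces $\delta \rtimes \sigma$ irreducible, giving $R(\pi) = 1$. If $\varphi_\rho \cong {}^\theta\widetilde{\varphi}_\rho$, I would subdivide according to whether $\varphi_\rho \otimes S_a$ appears as a constituent of $(\varphi_\sigma)_E$. When it does, the multiplicity in $\varphi_E$ is three; using the hypothesis $\lambda_{\varphi_\rho} = \lambda_\rho$ to transfer between (J-1) and (J-1$''$), the orthogonal sign required by (\ref{Sphi}) agrees with the sign supplied by (J-1$''$) applied to $\sigma$, producing an $O(3,\mathbb{C})$-block. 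Since $W(O(3),T) = W(SO(3),T)$ we get $R_\varphi = 1$, and (J-2) simultaneously forces $\delta \rtimes \sigma$ irreducible, giving $R(\pi) = 1$.

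The remaining case is that $\varphi_\rho \otimes S_a$ does not appear among the constituents of $(\varphi_\sigma)_E$, so $(\rho,a) \notin \Jord(\sigma)$ and either (J-1$''$) or (J-2) fails. A failure of (J-1$''$) gives $\lambda_{\varphi_\rho \otimes S_a} = (-1)^{n+2\ell}$, producing a connected $Sp(2,\mathbb{C})$-block with $R_\varphi = 1$; on the spectral side $\delta \rtimes \sigma$ is irreducible, so $R(\pi) = 1$. A failure of (J-2) while (J-1$''$) holds gives sign $(-1)^{(n+2\ell)-1}$ and an $O(2,\mathbb{C})$-block with component group $\mathbb{Z}_2$, yielding $R_\varphi \cong \mathbb{Z}_2$, which matches the reducibility of $\delta \rtimes \sigma$. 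The main obstacle I foresee is the parity bookkeeping: the sign condition in (J-1$''$) is stated relative to the rank $n$ of the unitary group carrying $\sigma$, whereas the orthogonal/symplectic dichotomy in (\ref{Sphi}) is stated relative to the ambient rank $n+2\ell$ of the Levi's overgroup. The two parities agree because $2\ell$ is even, but invoking this cleanly at even $d$ requires the hypothesis $\lambda_{\varphi_\rho} = \lambda_\rho$ so that the Henniart-type reducibility condition on $\rho$ is faithfully encoded by the parameter-theoretic condition on $\varphi_\rho$.
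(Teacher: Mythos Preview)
Your proposal is correct and follows essentially the same approach as the paper: both proofs restrict $\varphi$ to $W_E\times SL(2,\mathbb{C})$, invoke the centralizer formula (\ref{Sphi}), and then run the identical case analysis on whether $\varphi_\rho \cong {}^\theta\widetilde{\varphi}_\rho$, whether $\varphi_\rho\otimes S_a$ occurs in $(\varphi_\sigma)_E$, and which of (J-1$''$), (J-2) fails. The only cosmetic difference is that in the non-self-conjugate case the paper records two $GL(1,\mathbb{C})$ factors (one for each of $\varphi_\rho\otimes S_a$ and its conjugate-dual) while you collapse the pair into a single $GL(1,\mathbb{C})$; either description yields a connected group, so the conclusion is unaffected.
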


\end{document}